\documentclass{amsart}

\usepackage{amssymb}
\usepackage{amsthm}
\usepackage{amsmath}
\usepackage[dvipdfmx]{graphicx}
\usepackage{txfonts}
\usepackage[all]{xy}
\usepackage{color}

\theoremstyle{plain}
\newtheorem{theorem}{Theorem}[section]
\newtheorem{proposition}[theorem]{Proposition}
\newtheorem{corollary}[theorem]{Corollary}
\newtheorem{lemma}[theorem]{Lemma}

\newtheorem*{Theorem}{Theorem}

\newtheorem*{conjecture*}{Conjecture}
\newtheorem*{thm:RuelleRtorsion}{Theorem~\ref{thm:Ruelle_Rtorsion_eqn}}
\theoremstyle{definition}
\newtheorem{definition}[theorem]{Definition}
\newtheorem*{definition*}{Definition}
\newtheorem{example}[theorem]{Example}
\newtheorem*{example*}{Example}
\newtheorem*{notation*}{Notation}
\newtheorem*{notation-conv*}{Notation and convention}
\newtheorem*{convention*}{Convention}
\theoremstyle{remark}
\newtheorem{remark}[theorem]{Remark}
\newtheorem*{remark*}{Remark}

\newcommand{\Z}{\mathbb{Z}}

\newcommand{\C}{\mathbb{C}}

\newcommand{\SL}{\mathrm{SL}_2(\C)}

\newcommand{\SU}{\mathrm{SU}(2)}

\newcommand{\trace}{\mathop{\mathrm{tr}}\nolimits}

\newcommand{\bm}[1]{\mbox{\boldmath{$#1$}}}

\newcommand{\bd}{\partial}

\newcommand{\Tor}[1]{\mathop{\mathbb{T}_{\mu}}\nolimits (#1)}

\begin{document}

\title[]{Gang-Kim-Yoon integrality conjectures on adjoint Reidemeister torsions for  torus knots
}

\author{Yuji Terashima}
\address{Graduate school of science, Tohoku University,
6-3, Aoba, Aramaki-aza, Aoba-ku, Sendai, 980-8578, Japan}
\email{yujiterashima@tohoku.ac.jp}

\author{Yoshikazu Yamaguchi}
\address{Faculty of Commerce,
  Waseda University,
  1-6-1 Nishiwaseda, Shinjuku-ku,
  Tokyo, 169-8050, Japan}
\email{shouji@waseda.jp}

\keywords{
  Reidemeister torsion, Verlinde formula, torus knots
}
\subjclass[2020]{
  57K31, 57R56.
}

\begin{abstract}
  We study the conjecture that a sum of the $(g-1)$-st powers of adjoint Reidemeister torsions for a torus knot is an integer. We prove that the conjecture is true for any torus knot and all $g \geq 0$.
  To prove the conjecture, we introduce the Verlinde numbers for torus knots from the viewpoint of modular $S$-matrix and show the recursion formulas and initial values of them. The recursion formulas of Verlinde numbers prove the integrality of the sum of the $(g-1)$-st powers of adjoint Reidemeister torsions.
  Related to a modular $S$-matrix, we also provide a birational model of the character variety for a torus knot and show how to recover the adjoint Reidemeister torsion for a torus knot from the Hessian of the polynomial defining the birational model.
\end{abstract}

\maketitle

\section{Introduction}
Recently, Gang, Kim, and Yoon \cite{GKY} proposed a striking conjecture stating that, for a $3$-manifold and any non-negative integer $g$, a sum of the $(g-1)$-st powers of adjoint Reidemeister torsions for irreducible representations of the fundamental group is an integer. This conjecture is based on the $3$d–$3$d correspondence \cite{TY1,TY2,DG, DGG,CCV}, a physical duality that relates geometry of a $3$-manifold $M$ to $3$-dimensional gauge theory labeled by $M$. More concretely, via the $3$d–$3$d correspondence, they identify the index of a 3-dimensional gauge theory on the product of a genus $g$ surface and a circle with the sum of adjoint Reidemeister torsions above, and derive the conjecture from the integrality of the index. See also \cite{GK, GKP, BGP}.

In this paper, we prove this integrality conjecture for the case of complements of torus knots, for arbitrary non-negative integers $g$. The key idea of the proof is to introduce modular $S$-matrices for torus knots, formulated as generalizations of the modular $S$-matrices of Wess-Zumino-Witten models, and to interpret the sum of adjoint Reidemeister torsions above as a generalization of the Verlinde numbers of Wess-Zumino-Witten models. We then prove the conjecture as follows. 
\begin{Theorem}[Theorem~\ref{thm:integerness}]
  The following sum of the adjoint Reidemeister torsions for any $(p,\, q)$-torus knot exterior on a level set of the trace function of the meridian $\mu$ at a generic $c \in \C$ 
  \[\sum_{[\rho] \in \mathrm{tr}_\mu^{-1}(c)} (2\Tor{\rho})^{g-1}\]  
  is an integer for all $g \geq 0$.
\end{Theorem}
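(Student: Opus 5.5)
The plan is to make both sides of the sum completely explicit and then recognise it as a Verlinde-type sum with an integral "fusion" structure. First I would recall the standard description of irreducible $\SL$-characters of the $(p,q)$-torus knot group $\langle x,y \mid x^p=y^q\rangle$. Since $x^p$ is central, an irreducible $\rho$ has $\rho(x)$ with eigenvalues $e^{\pm \pi i a/p}$ and $\rho(y)$ with eigenvalues $e^{\pm \pi i b/q}$, where $0<a<p$, $0<b<q$ and $a\equiv b \pmod 2$. Each such pair $(a,b)$ gives a one-parameter component of the character variety. On that component, $\mathrm{tr}_\mu$ is a nonconstant (affine-linear in a suitable coordinate) function. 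Hence for generic $c$ the fibre $\mathrm{tr}_\mu^{-1}(c)$ consists of exactly one point $\rho_{a,b}(c)$ on each component.

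Next I would use the known closed formula for the adjoint torsion of the torus knot exterior with respect to the meridian (Dubois' computation, via the Mayer--Vietoris splitting into the two solid tori glued along an annulus). It has the shape
\[
2\Tor{\rho_{a,b}} \;=\; \frac{p^2q^2\,\cdot(\text{factor in }c)}{16\,\sin^2(\pi a/p)\sin^2(\pi b/q)}\cdot(\text{normalisation}),
\]
so that, up to an explicit $c$-dependent factor that I would track carefully, $(2\Tor{\rho})^{-1}$ equals $S_{0,(a,b)}^2$ for the matrix
\[
S_{(a,b),(a',b')} \propto \sin(\pi a a'/p)\,\sin(\pi b b'/q).
\]
This is the tensor product of the $\mathrm{SU}(2)$ WZW $S$-matrices at levels $p-2$ and $q-2$, restricted to the parity condition $a\equiv b$. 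With this identification, the sum in the statement becomes
\[
V_g \;=\; \sum_{(a,b)} S_{0,(a,b)}^{\,2-2g}.
\]
This is the analogue of the Verlinde number. I would then show that $V_g$ satisfies a recursion coming from the Verlinde formula. Let $N_{(1,1)}$ be the fusion matrix of the generating primary; it is diagonalised by $S$ with eigenvalues built from $\cos(\pi a/p)$ and $\cos(\pi b/q)$. The quantity $S_{0,(a,b)}^{-2}$ is then a polynomial with integer coefficients (essentially a Chebyshev expression) in these eigenvalues. Consequently $V_g$ is a trace $\mathrm{tr}(M^{g-1})$ of an integer matrix $M$ in the fusion ring, and $V_0$ is checked separately. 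The case $g=0$ needs the normalisation factor to be invertible appropriately. An alternative route to integrality is Galois invariance: the set $\{(a,b)\}$ is permuted by $\mathrm{Gal}(\mathbb{Q}(\zeta_{4pq})/\mathbb{Q})$, so $V_g$ is rational. Its terms are algebraic integers for $g\ge1$, which gives integrality for those $g$.

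The main obstacle is the $c$-dependence and the exact constants. The meridian-normalised torsion involves a factor depending on the eigenvalue of $\rho(\mu)$, and for genericity I must show this factor is actually constant, or cancels, on the fibre. In particular, the powers of $2$, $p$ and $q$ must combine so that $S_{0,(a,b)}^{-2}$ is an algebraic integer with the correct normalisation. The case $g=0$, where the sum is $\sum 2\,S_{0,(a,b)}^2$-type and integrality comes from unitarity of $S$, also needs care. I would first verify the constants on the trefoil by direct computation, then derive recursions in $g$ and confirm the initial values $g=0,1$ explicitly.
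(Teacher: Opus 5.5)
Your strategy, especially the Galois route, is viable and genuinely different from the paper's. As written, however, it stops exactly where the content of the theorem lies.

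The input you call the main obstacle is harmless. On each component $\C_{a,b}$ the meridian torsion is constant, $\Tor{\rho}=pq/(16\sin^2(\pi a/p)\sin^2(\pi b/q))$; the paper takes this as known input. So there is no $c$-dependent factor, and the numerator is $pq$, not $p^2q^2$.

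The real gap is that integrality here is a statement about powers of $2$, and you never verify it. For the trefoil $\Tor{\rho}=1/2$, so $\sum(\Tor{\rho})^{g-1}=2^{1-g}$ is not an integer; the point is that exactly $2\Tor{\rho}$ is integral.
\begin{itemize}
\item In route A, the unitary $S$-matrix of $\mathrm{SU}(2)_{p-2}\otimes\mathrm{SU}(2)_{q-2}$ gives $S_{0,(a,b)}^{-2}=4\Tor{\rho}$ and a trace over all $(p-1)(q-1)$ labels. This only proves $2^{g}V_g\in\Z$ for your sum $V_g$. To get $V_g\in\Z$ you would need the index-two subring spanned by labels with $a\equiv b \pmod 2$, whose characters correspond to the pairs $\{(i,j),(p-i,q-j)\}$. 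You would also need its Casimir element $\sum_x x^2$, whose eigenvalues are exactly $2\Tor{\rho}$. None of this is set up.
\item In route B, the sentence ``its terms are algebraic integers'' is precisely the unproved claim, and it holds only because $q$ is odd.
\end{itemize}

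The gap can be filled along your route B. Since $q$ is odd, write $2\Tor{\rho}=A_aB_b$ with
\[
A_a=\frac{p}{2\sin^2(\pi a/p)}=\sum_{0<k<p}S_{k-1}\bigl(2\cos\tfrac{\pi a}{p}\bigr)^2,\qquad
B_b=\frac{q}{4\sin^2(\pi b/q)}=\sum_{\substack{0<k<q\\ k\ \mathrm{odd}}}S_{k-1}\bigl(2\cos\tfrac{\pi b}{q}\bigr)^2 .
\]
The first identity is Lemma~\ref{lem:Zagier}; the second follows from it because $k\mapsto q-k$ swaps parities. Hence both $A_a$ and $B_b$ are algebraic integers.

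Because $B_b=B_{q-b}$ with $b$ and $q-b$ of opposite parity, the sum factors as
$\bigl(\sum_{0<a<p}A_a^{g-1}\bigr)\bigl(\sum_{0<b<q/2}B_b^{g-1}\bigr)$.
For $g\ge 1$ each factor is a Galois-stable sum of algebraic integers, hence lies in $\Z$. For $g=0$ each factor equals $1$, since $\sum_{0<a<p}2\sin^2(\pi a/p)=p$.

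The paper instead proves fusion rules for the generalized Verlinde numbers $d(g,\bm{n})$. It computes $d(0,\cdot)$ in weights $1,2,3$ and $d(1,\bm{\ell}_{a,b})$, then inducts to get $d(g,\bm{0})\in 2^{2-g}\Z$. The completed version of your argument is much shorter. It also shows the torsion sum is a product of the classical $\mathrm{SU}(2)_{p-2}$ Verlinde number and the analogous number for the integer-spin subring of $\mathrm{SU}(2)_{q-2}$. The paper's route in exchange yields the whole family $d(g,\bm{n})$ with gluing rules. As submitted, though, your proposal does not yet establish integrality.
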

This theorem is derived by showing that this generalized Verlinde number satisfies a recursion relation with respect to the genus $g$. The integrality of the initial value in the recursion follows from a result of \cite{TranY}; in fact, we show that for all torus knots, each sequence $\{\sum_{[\rho] \in \mathrm{tr}_\mu^{-1}(c)} (2\Tor{\rho})^{g-1}\}$ on $g$ starts with the common value $1$ at $g=0$, that is, it holds that $\sum_{[\rho] \in \mathrm{tr}_\mu^{-1}(c)} (2\Tor{\rho})^{-1}=1$ for all torus knots. 
After the first version of this paper was posted on arXiv, it was pointed out by A. T. Tran that this result had been obtained in the preprint \cite{MorifujiTran} independently of our work.

It is expected that the modular $S$-matrix above for torus knots will play an important role in research that links mathematics of knot theory with physics of gauge theory such as a project to associate modular tensor categories to 3-manifolds \cite{CGK, CQW}, knot-quiver correspondence \cite{KRSS1,KRSS2,EKL} and knot-gauge correspondence \cite{MTT}. 

In this paper, we focus on $S$-matrices derived from plane algebraic curves which is called \textit{Chebyshev curves}.
We propose a rational model of the character varieties of torus knot groups as the resolution of Chebyshev curves (see Theorem~\ref{thm:birational}) and recover the adjoint Reidemeister torsions of a torus knot from the Hessians of the defining polynomial of a Chebyshev curve (see Theorem~\ref{thm:RtorsionHessian}) at a critical point as follows.

Set $F(X, Y) = C_p(X) - C_q(Y)$ for coprime integers $p$ and $q$ where $C_k(z)$ is the Chebyshev polynomial of the first kind (see SubSect.~\ref{sec:chebyshev} for the details) and denote by $C_{p,\, q}$ the Chebyshev curve $F(X, Y)=0$.
\begin{Theorem}[Theorem~\ref{thm:birational}]
 The resolution $Y_{p,\, q}$ of $C_{p,\, q}$ by blowing-up consists of a non-singular curve $D_{p,\, q}$ and $(p-1)(q-1)/2$ exceptional lines in $\C^2 \times \C$.

  The character variety $X(G_{p,\, q})$ of a torus knot group $G_{p,\, q}$ is birational to $Y_{p,\, q}$.
  More precisely, the component $X^{red}(G_{p,\, q})$ of reducible characters is birational to $D_{p,\, q}$ and the others $X^{irr}(G_{p,\, q})$ of irreducible characters are isomorphic to $Y_{p,\, q} \setminus D_{p,\, q}$.
\end{Theorem}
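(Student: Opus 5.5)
Write $C_k$ for the Chebyshev polynomial of the first kind, normalized (as I will assume) by $C_k(z+z^{-1}) = z^k + z^{-k}$. The plan is to parametrize everything through these polynomials and compare the two sides explicitly.

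\textbf{Step 1: singularities of $C_{p,\,q}$.} The curve $F(X,Y)=C_p(X)-C_q(Y)=0$ is singular exactly where $C_p'(X)=0$ and $C_q'(Y)=0$. The critical points of $C_p$ are $X=2\cos(\pi j/p)$ for $1\le j\le p-1$, with critical values $2(-1)^j$. Likewise the critical points of $C_q$ are $Y=2\cos(\pi k/q)$ with values $2(-1)^k$. The singular points of $C_{p,\,q}$ are therefore the pairs $(j,k)$ with $j\equiv k \pmod 2$. Counting them gives
\[
\frac{(p-1)(q-1)}{2},
\]
using that $p,q$ are coprime, so not both are even. At each such point both critical values are nondegenerate, so the Hessian is nondegenerate and the point is an ordinary node. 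Blowing up each node once in $\C^2$ separates the two branches. The strict transform $D_{p,\,q}$ is then nonsingular, and each blow-up contributes an exceptional $\mathbb{P}^1$. Restricted to the chart in $\C^2\times\C$, each of these is a line, which gives the stated description of $Y_{p,\,q}$.

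\textbf{Step 2: the reducible component.} Reducible characters of $G_{p,\,q}=\langle x,y\mid x^p=y^q\rangle$ factor through the abelianization $\Z$. Such a character is determined by the meridian eigenvalue $t$, and then
\[
\trace\rho(x)=t^q+t^{-q}, \qquad \trace\rho(y)=t^p+t^{-p}.
\]
The map $t\mapsto(t^q+t^{-q},\,t^p+t^{-p})$ lands in $C_{p,\,q}$, because $C_p(t^q+t^{-q})=t^{pq}+t^{-pq}=C_q(t^p+t^{-p})$. It is generically injective modulo $t\mapsto t^{-1}$, since $\gcd(p,q)=1$. Its image is an irreducible rational curve, which must be all of $C_{p,\,q}$ because $F$ is irreducible. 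The normalization of $C_{p,\,q}$ is $D_{p,\,q}$. On the other side, $X^{red}(G_{p,\,q})\cong\C$ via $\trace\rho(\mu)=t+t^{-1}$. Hence $X^{red}(G_{p,\,q})$ and $D_{p,\,q}$ are birational, both being the $t$-line modulo inversion.

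\textbf{Step 3: the irreducible components.} An irreducible $\rho$ sends the central element $x^p=y^q$ to $\pm I$. This forces $\rho(x)$ to have eigenvalue $e^{\pi i a/p}$ with $0<a<p$, and $\rho(y)$ to have eigenvalue $e^{\pi i b/q}$ with $0<b<q$, subject to $a\equiv b \pmod 2$. Each such pair gives a component of $X^{irr}(G_{p,\,q})$, isomorphic to $\C$ minus finitely many points (the places where $\rho$ becomes reducible). The standard coordinate on this component is the meridian trace, or equivalently $\trace\rho(xy^{-1})$ in suitable normalizations. The pairs $(a,b)$ match the nodes $(j,k)$ of Step 1, since the traces $\trace\rho(x)=2\cos(\pi a/p)$ and $\trace\rho(y)=2\cos(\pi b/q)$ are exactly the node coordinates. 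So each irreducible component collapses to a node under $\rho\mapsto(\trace\rho(x),\trace\rho(y))$. I will lift this to the exceptional line over that node by taking the direction of the tangent line through the node as a function of the free coordinate, and check that this gives an isomorphism of the component onto the exceptional line minus the points meeting $D_{p,\,q}$.

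\textbf{Main obstacle.} I expect the difficulty to lie in Step 3, in making the isomorphism precise. One must identify the affine coordinate of an irreducible component with the slope coordinate on the exceptional line, presumably through an explicit normal form $\rho(x)$, $\rho(y)$ with one free parameter. One must also check that the excluded points correspond to the two tangent directions of the node, which are the points where $D_{p,\,q}$ meets the line. Equivalently, these are the limits of reducible characters, given by the roots of the Alexander polynomial, and this is how the components glue.
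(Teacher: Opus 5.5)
Your Steps 1 and 2 follow the paper's route and are fine. The paper takes the nodal structure from Fischer and only asserts that $\Phi^{red}(t)=(C_q(t),C_p(t),[C'_q(t):C'_p(t)])$ is birational. Your nondegenerate-Hessian argument, the generic-injectivity check via $\gcd(p,q)=1$, and the identification of $D_{p,q}$ with the normalization of $C_{p,q}$ supply details the paper leaves out.

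The gap is Step 3, which is where the irreducible half of the statement has its content, and which you leave as an unresolved ``main obstacle''.

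\begin{itemize}
\item The lift you propose, ``the direction of the tangent line through the node as a function of the free coordinate'', does not define a map. Every character on $\C_{a,b}$ is sent to the node itself and determines no tangent direction there. Nothing in your plan says how the free parameter selects a point of the exceptional $\mathbb{P}^1$.
\item ``$\C$ minus finitely many points'' is too weak, since an isomorphism needs the number of punctures to match.
\item You never say which point of the exceptional $\mathbb{P}^1$ is discarded to land in $\C^2\times\C$.
\end{itemize}

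The missing observation, which is the paper's entire argument, is that no intrinsic identification is needed: both sides are $\mathbb{P}^1$ minus three explicit points.

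\textbf{The component $\C_{a,b}$.} By Johnson, $\mathrm{tr}_\mu$ maps $\C_{a,b}$ isomorphically onto $\C\setminus\{2\cos(ar\pi/p\pm bs\pi/q)\}$, where $ps-qr=1$. You can also see this from the normal form with $\rho(\alpha)$ upper and $\rho(\beta)$ lower triangular. There $\mathrm{tr}\,\rho(\mu)$ is affine in the off-diagonal parameter $u$, with nonzero slope, and irreducibility fails exactly at $u=0$ and $u=4\sin(a\pi/p)\sin(b\pi/q)$.

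\textbf{The exceptional line.} Realize the blow-up globally as $\{F_XZ_0=-F_YZ_1\}\subset\C^2\times\mathbb{P}^1$, and discard $[1:0]$.
\begin{itemize}
\item Solving $C_q(t)=2\cos(a\pi/p)$ and $C_p(t)=2\cos(b\pi/q)$ gives $t=2\cos(ar\pi/p\pm bs\pi/q)$.
\item Using $C'_k=kS_{k-1}$, it follows that $D_{p,q}$ meets the exceptional line at $[q\sin(a\pi/p):\mp p\sin(b\pi/q)]$.
\item These two points are distinct from each other and from $[1:0]$.
\end{itemize}
So $L_{a,b}$ is also a thrice-punctured $\mathbb{P}^1$. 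A M\"obius transformation sending these two points to the two excluded trace values and $[1:0]$ to $\infty$ is the required isomorphism.

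Your remark that the punctures are where the component meets $X^{red}$ is correct. The same two values of $t$ occur on both sides, so the matching can be chosen compatibly with the reducible curve. The statement as posed, however, does not require this.
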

The $Y_{p,\, q} \setminus D_{p,\, q}$ above is the union of $L_{a,\, b} = \C \setminus \{\hbox{two points}\}$ ($0<a<p$, $0<b<q$, $a \equiv b \pmod{2}$). Each component $L_{a, b}$ lies on the critical point $(2\cos(\pi a / p),\, 2\cos(\pi b / q))$ of $F(X,\, Y)$ over the Chebyshev curve $C_{p,\, q}$. The adjoint Reidemeister torsion gives the following locally constant function on $Y_{p,\, q} \setminus D_{p,\, q}$.
\begin{Theorem}[Theorem~\ref{thm:RtorsionHessian}]
  The adjoint Reidemeister torsion $\Tor{\rho}$ for the $(p,\, q)$-torus knot exterior and a meridian $\mu$ satisfies
  \[
    \Tor{\rho}
    = \frac{-1}{4pq}\det
    \left.
      \left(\frac{\partial (F_X,\, F_Y)}{\partial (X,\, Y)} \right)
      \right|_{(X,\, Y)=(2\cos (a\pi/p),\, 2\cos (b\pi/q))}
   \]
   where $(\partial (F_X,\, F_Y) / (X,\, Y))$ is the Jacobian determinant of partial derivative $F_X$ and $F_Y$ of $F(X, Y)$, which are the defining polynomials of exceptional lines, on $L_{a,\, b}$.
   In other words, it holds that
   \[
   \Tor{\rho}= \frac{-1}{4pq} \left.\mathrm{Hess}(F)\right|_{(X,\, Y)=(2\cos(\pi a / p),\, 2\cos(\pi b / q))}
   \]
  on $L_{a,\, b}$.
  Here $\mathrm{Hess}(F)$ is the Hessian of the defining polynomial $F(X, Y)$ of the Chebyshev curve $C_{p,\, q}$.
\end{Theorem}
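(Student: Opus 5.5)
Both sides are explicit functions of $(a,b)$, so the plan is to compute each one and compare.

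\emph{Hessian side.} I use the normalization $C_k(2\cos\theta)=2\cos(k\theta)$ for the Chebyshev polynomials of SubSect.~\ref{sec:chebyshev}. The Hessian of $F$ is diagonal, since $F_{XY}=0$. Hence
\[
\mathrm{Hess}(F)=F_{XX}F_{YY}=-C_p''(X)\,C_q''(Y).
\]
Write $X=2\cos\theta$, so that $dX=-2\sin\theta\,d\theta$. Differentiating $C_p(2\cos\theta)=2\cos p\theta$ gives
\[
C_p'(X)=\frac{p\sin p\theta}{\sin\theta}.
\]
At $\theta=a\pi/p$ we have $\sin p\theta=0$, so $F_X=0$; likewise $F_Y=0$ at $\varphi=b\pi/q$. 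This confirms that $(2\cos(a\pi/p),2\cos(b\pi/q))$ is a critical point, and it lies on $C_{p,q}$ because $a\equiv b \pmod 2$ forces $(-1)^a=(-1)^b$. Differentiating once more and evaluating where $\sin p\theta=0$, only the term containing $\cos p\theta$ survives:
\[
C_p''(X)=\frac{p^2\cos p\theta}{-2\sin^2\theta}=\frac{-p^2(-1)^a}{2\sin^2(a\pi/p)}.
\]
The analogous formula holds for $C_q''$. Using $(-1)^{a+b}=1$, this gives
\[
\mathrm{Hess}(F)=\frac{-p^2q^2}{4\sin^2(a\pi/p)\sin^2(b\pi/q)},
\]
and therefore
\[
\frac{-1}{4pq}\,\mathrm{Hess}(F)=\frac{pq}{16\sin^2(\pi a/p)\sin^2(\pi b/q)}.
\]

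\emph{Torsion side.} I parametrize the irreducible characters of $G_{p,q}=\langle x,y\mid x^p=y^q\rangle$ in the standard way. Under the isomorphism of Theorem~\ref{thm:birational}, the component $L_{a,b}$ consists of those $\rho$ with
\[
\trace\rho(x)=2\cos(\pi a/p),\qquad \trace\rho(y)=2\cos(\pi b/q).
\]
Along $L_{a,b}$ only $\trace\rho(\mu)$ varies. I would then compute $\Tor{\rho}$ with respect to the meridian using the Fox calculus / Mayer--Vietoris decomposition of the exterior into two solid tori glued along an annulus. This is Dubois's formula for the adjoint torsion of torus knots, and I would either cite it or rederive it. The key steps are:
\begin{itemize}
\item The $\mathfrak{sl}_2$-cohomology of the cyclic pieces $\langle x\rangle$ and $\langle y\rangle$ contributes the factors $2-2\cos(2\pi a/p)=4\sin^2(\pi a/p)$ and $4\sin^2(\pi b/q)$. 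These are determinants of $\mathrm{Ad}\rho(x)-1$ on the complement of its fixed line.
\item The choice of the meridian basis of $H^1$ contributes a factor $pq$ through the relation $\mu^{pq}\sim$ (longitude). Equivalently, it comes from the change of coordinates $x^{p}=y^q$ central, with $\mu=x^{r}y^{s}$ where $ps+qr=1$.
\end{itemize}
This should give
\[
\Tor{\rho}=\pm\frac{pq}{16\sin^2(\pi a/p)\sin^2(\pi b/q)},
\]
independent of the point of $L_{a,b}$.

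The main obstacle is the sign and normalization in this torsion formula: the factors of $2$ and $4$, the sign convention for $\Tor{\rho}$, and the fact that the dependence on $\trace\rho(\mu)$ cancels. I would fix these by checking the trefoil, $p=2$, $q=3$, $a=b=1$. There
\[
\frac{-1}{4pq}\,\mathrm{Hess}(F)=\frac{6}{16\cdot 1\cdot 3/4}=\frac12,
\]
and I would compare this with the known value of the adjoint torsion of the trefoil. This value is also consistent with the identity $\sum(2\Tor{\rho})^{-1}=1$ from \cite{TranY}, since the trefoil has a single component. With the constant calibrated, the two expressions agree on every $L_{a,b}$, which proves both displayed forms of the statement, because the Jacobian of $(F_X,F_Y)$ equals $\mathrm{Hess}(F)$.
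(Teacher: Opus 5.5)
Your proposal is correct and is essentially the paper's proof: the paper also takes the value $\Tor{\rho}=pq/(16\sin^2(\pi a/p)\sin^2(\pi b/q))$ on $\C_{a,b}$ as known, and only computes the diagonal Hessian, obtaining $-p^2q^2/(4\sin^2(a\pi/p)\sin^2(b\pi/q))$ via $C_k'=kS_{k-1}$ and $S_{k-1}'(z)=(kC_k(z)-zS_{k-1}(z))/(z^2-4)$, which is the same computation as your substitution $X=2\cos\theta$. Cite that torsion formula directly rather than fixing the sign by the trefoil check, since a single example cannot settle a sign that might a priori depend on $(a,b)$.
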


\medskip

We also touch recent developments around Gang-Kim-Yoon's integrality conjectures.
As for the case of $g=0$ and a compact hyperbolic $3$-manifold with torus boundary, it is conjectured that the inverse sum of adjoint Reidemeister torsions equals zero. This is called a \textit{vanishing conjecture} on adjoint Reidemeister torsions. We can find supporting evidence of vanishing conjecture for the knot exteriors of the figure--eight knot~\cite{GKY}, hyperbolic twist knots~\cite{Yoon} and hyperbolic two--bridge knots~\cite{Yoon2}. As other supporting examples,
Tran and the second author showed that the vanishing identity also holds for once--punctured torus bundle over the circle with tunnel number one.
There is another different perspective in which N.~Wakijo~\cite{Wakijo} studied the inverse sum of adjoint Reidemeister torsions for closed $3$-manifolds obtained by $p/1$ and $1/q$-surgeries along the figure--eight knot. 

\medskip

This paper is organized as follows:
Sect.~2 reviews the character varieties of torus knot groups and provides a rational model for the character varieties of torus knot groups by a resolution of Chebyshev curves. 
We will see the Hessian of the defining polynomial for a Chebyshev curve gives the adjoint Reidemeister torsions for a torus knot exterior in Sect.~3.
Sect.~4 introduces the Verlinde numbers of torus knots. Sect.~5 gives the recursion relation and the initial values of our Verlinde numbers and shows the integrality of the power sum of the adjoint Reidemeister torsions. We also touch a relation between the generalized Verlinde numbers and classical ones in Sect.~6.

\subsection*{Acknowledgements}
  We would like to express our gratitude to H.~Fuji, D.~Gang, M.~Manabe, R.~Tange, and S.~Terashima for helpful discussions, and to T.~Nosaka for carefully reading the first version of this paper and providing many useful comments. We are also grateful to A.~T.~Tran for kindly informing us of an interesting, relevant preprint.
  The authors are supported by JSPS KAKENHI Grant Number 22H01117, 24K06720 and 25K06969.
  
\section{Character varieties for torus knots}
We start with a review on some known results on the character varieties of torus knot groups and then we will provide a rational model of our character varieties by resolutions of algebraic plane curves, called \textit{Chebyshev curves}, in $\C^2 \times \mathbb{P}^1$.
\subsection{Review on the character variety for a torus knot group}

Let $p$ and $q$ be coprime integers. Let $T_{p,\, q}$ be the torus knot of type $(p,\, q)$.
For simplicity, we assume that $p$ and $q$ are positive and $q$ is always odd.
We write $E_{p,\, q}$ for the torus knot exterior $S^{3} \setminus N(T_{p,\, q})$ in the $3$-sphere.
Here $N(T_{p,\, q})$ denotes an open tubular neighbourhood of the knot $T_{p,\, q}$.

It is known that the knot group $G_{p,\, q}=\pi_1(E_{p,\, q})$ has the following presentation:
\begin{equation}
  \label{eq:pres_torusknotgr}
  G_{p,\, q} = \langle \alpha, \beta \,|\, \alpha^p = \beta^q \rangle
\end{equation}
A meridian $\mu$ and a preferred longitude $\lambda$ of $T_{p,\, q}$ are expressed as
\begin{equation}
  \mu = \alpha^{-r} \beta^s \quad \text{and} \quad \lambda = \alpha^p \mu^{-pq} (=\beta^{q}\mu^{-pq}) 
\end{equation}
for integers $r$ and $s$ such that $ps-qr=1$ in the presentation~\eqref{eq:pres_torusknotgr}.
$\SL$-representations $\rho$ of $G_{p,\, q}$ mean homomorphisms from $G_{p,\, q}$ into $\SL$. The \textit{character} of an $\SL$-representation $\rho$ is defined as a function $\chi_\rho : G_{p,\, q} \to \C$ given by $\chi_\rho(g) = \mathrm{tr}\, \rho(g)$.
The set of characters has a structure of affine algebraic variety, which is called \textit{the character variety} of the torus knot group $G_{p,\, q}$ and denoted by $X(G_{p,\, q})$.

An $\SL$-representation $\rho$ is referred to as being \textit{irreducible} if there is no invariant line in $\C^2$ under the action of $\rho(G_{p,\, q})$ and $\rho$ is said to be \textit{reducible} otherwise.
We write $X^{irr}(G_{p,\, q})$ and $X^{red}(G_{p,\, q})$ for the set of characters of irreducible and reducible $\SL$-representations of $G_{p,\, q}$ respectively. 

It is shown in~\cite{CS} that irreducible $\SL$-representations $\rho$ and $\rho'$ are conjugate if and only if $\chi_\rho = \chi_{\rho'}$.
We can regard $X^{irr}(G_{p,\, q})$ as the set of conjugacy classes $[\rho]$ of irreducible $\SL$-representations $\rho$ of $G_{p,\, q}$.
The set $X^{irr}(G_{p,\, q})$ is the disjoint union of algebraic varieties which are isomorphic to the complex lines $\C$ without two points.
We call $X^{irr}(G_{p,\, q})$ the irreducible part of the character variety $X(G_{p,\, q})$. According to~\cite{Johnson}, the irreducible part $X^{irr}(G_{p,\, q})$ consists of $(p-1)(q-1)/2$ components as follows. 
\begin{itemize}
\item the traces $\trace \rho(\alpha)$ and $\trace \rho(\beta)$ have the constant values
  $2 \cos (\pi a / p)$ and $2 \cos (\pi b / q)$ such that $a \equiv b \pmod{2}$ and $0 < a < p$, $0 < b < q$ on each component in $X(G_{p,\, q})$.
\item the trace $\trace \rho(\mu)$ on every component gives an isomorphism to $\C \setminus \{2\cos(ar\pi/p \pm bs\pi/q)\}$ (i.e., except for two points).
\end{itemize}
We write $\C_{a, b}$ for each component of $X^{irr}(G_{p,\, q})$ such that $\trace \rho(\alpha)=2\cos(\pi a / p)$ and $\trace \rho(\beta) = 2\cos(\pi b/q)$ for all conjugacy classes $[\rho]$ on the component.
\begin{example}
  We examine the images of $\alpha$, $\beta$ and $\mu$ in $G_{p,\, q}$ under an irreducible $\SL$-representation $\rho$.
  We can assume that $\rho(\alpha)$, $\rho(\beta)$ are given by
  \[\rho(\alpha)
    = \begin{pmatrix}
      x & 1 \\
      0 & x^{-1}
    \end{pmatrix} \quad \text{and}\quad
    \rho(\beta)
    = \begin{pmatrix}
      y & 0 \\
      u & y^{-1}
    \end{pmatrix}
  \]
  up to conjugation.
  The relation $\alpha^p = \beta^q$ gives the equality that
  \[
    \begin{pmatrix}
      x^p & (x^p - x^{-p})/(x-x^{-1}) \\
      0 & x^{-p}
    \end{pmatrix}
    =\begin{pmatrix}
      y^q & 0 \\
      u(y^q - y^{-q})/(y-y^{-1}) & y^{-q}
    \end{pmatrix}.
  \]
  Thus we have $x=e^{\pm a\pi\sqrt{-1}/p}$ and $y=e^{\pm b \pi\sqrt{-1}/q}$ where $0 < a < p$, $0 < b < q$ and $a \equiv b \pmod{2}$.
   If we set $x=e^{a\pi\sqrt{-1}/p}$ and $y=e^{b\pi\sqrt{-1}/q}$, then the trace of $\rho(\mu) = \rho(\alpha^{-r}\beta^s)$ is given by
  \begin{align*}
    &x^{-r} y^s + x^r y^{-s} - u \frac{x^r - x^{-r}}{x-x^{-1}} \frac{y^s - y^{-s}}{y-y^{-1}} \\
    &= 2 \cos (-ar\pi/p + bs\pi/q) - u \frac{\sin (ar\pi/p)}{\sin (a\pi/p)} \frac{\sin (bs\pi/q)}{\sin (b\pi/q)}.
  \end{align*}
  We need to exclude $u=0$ and $u=4\sin(a\pi/p)\sin(b\pi/q)$ for $\rho$ to be irreducible.
  In these cases, $\mathrm{tr}\,\rho(\mu)$ turns out to be $2 \cos(ar\pi/p \mp bs\pi/q)$.
\end{example}

We also set the function $\mathrm{tr}_\gamma$ for any $\gamma \in G_{p,\, q}$ on $X(G_{p,\, q})$ as
\[\mathrm{tr}_\gamma(\chi_{\rho}) = \trace \rho(\gamma) \,( =\chi_{\rho}(\gamma) )\]
and call it the \textit{trace function} of $\gamma$.

On the reducible part $X^{red}(G_{p,\, q})$, each character $\chi_{\rho}$ factors through the abelianization $\mathrm{ab}: G_{p,\, q} \to G_{p,\, q} / [G_{p,\, q}, G_{p,\, q}] \simeq H_1(E_{p,\, q}; \Z) = \langle [\mu] \rangle$. We write $\varphi_\rho : \langle \mu \rangle \to \C$ for the induced function from the character of a reducible $\SL$-representation $\rho$ such that $\chi_\rho = \varphi_\rho \circ \mathrm{ab}$. Since $\varphi_\rho$ is determined by the image of $\mu$, the character $\chi_\rho$ is also determined by the image of $\mu$. Therefore the trace function of $\mu$ also gives an isomorphism between the reducible part $X^{red}(G_{p,\, q})$ and $\C$. We will see explicit forms of trace functions on the reducible part in the following subsections and give a birational model of algebraic curve. 

\subsection{Chebyshev curves}
\label{sec:chebyshev}
We denote by $C_k(z)$ and $S_k(z)$ $(k \in \Z)$ the Chebyshev polynomials of the first and second kinds such that
\begin{gather*}
  C_{k+1}(z) = zC_k(z) - C_{k-1}(z), \quad C_1(z)=z,\, C_0(z)=2, \\
  S_{k+1}(z) = zS_k(z) - S_{k-1}(z), \quad S_1(z)=z,\, S_0(z)=1.
\end{gather*}
\begin{remark}
  The Chebyshev polynomials $T_k(z)$ and $U_k(z)$ of the first and second kinds are usually defined by
  \begin{gather*}
  T_{k+1}(z) = 2z T_k(z) - T_{k-1}(z), \quad T_1(z)=z,\, T_0(z)=1, \\
  U_{k+1}(z) = 2z U_k(z) - U_{k-1}(z), \quad U_1(z)=2z,\, U_0(z)=1.
\end{gather*}
Our Chebyshev polynomials satisfy that
\[C_k(z) = 2T_k(z/2) \quad \text{and} \quad S_k(z) = U_k(z/2).\]
Moreover if $ z = \zeta + \zeta^{-1}$, then $C_k(z)$ and $S_k(z)$ are expressed as
\[C_k(\zeta + \zeta^{-1}) = \zeta^k + \zeta^{-k} \quad \text{and} \quad
  S_k(\zeta + \zeta^{-1}) = \frac{\zeta^{k+1} - \zeta^{-(k+1)}}{\zeta - \zeta^{-1}}.\]
\end{remark}
\begin{lemma}
  \label{lem:rationality_Xred}
  If the homology class of $\gamma \in G_{p,\, q}$ is expressed as $[\gamma] = [\mu^k]$, then the trace function $\mathrm{tr}_\gamma$ satisfies $C_k \circ \mathrm{tr}_{\mu}$ on the reducible part $X^{red}(G_{p,\, q})$.

  In particular, the trace functions $\mathrm{tr}_{\alpha}$ and $\mathrm{tr}_{\beta}$ of the generators $\alpha$ and $\beta$ are expressed as $\mathrm{tr}_{\alpha} = C_q \circ \mathrm{tr}_{\mu}$ and $\mathrm{tr}_{\beta} = C_p \circ \mathrm{tr}_{\mu}$.
\end{lemma}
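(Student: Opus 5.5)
The plan is to reduce to diagonal representatives. Every reducible $\SL$-representation has the same character as a diagonal one. Indeed, a reducible $\rho$ can be conjugated to an upper triangular representation, and taking its diagonal part gives a representation with the same character. So for a reducible character $\chi_\rho$ we may assume that $\rho$ factors through the abelianization $\mathrm{ab}$, with
\[
\rho(\mu)=\begin{pmatrix} \zeta & 0\\ 0 & \zeta^{-1}\end{pmatrix}, \qquad \zeta\in\C^\times .
\]
This is exactly the description of $X^{red}(G_{p,\, q})$ via $\varphi_\rho$ given before Lemma~\ref{lem:rationality_Xred}.

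Next, suppose $[\gamma]=[\mu^k]$ in $H_1(E_{p,\, q};\Z)$. The representation factors through the abelianization, so $\rho(\gamma)=\rho(\mu)^k=\mathrm{diag}(\zeta^k,\zeta^{-k})$. Hence
\[
\mathrm{tr}_\gamma(\chi_\rho)=\zeta^k+\zeta^{-k}=C_k(\zeta+\zeta^{-1})=C_k(\mathrm{tr}_\mu(\chi_\rho)),
\]
using the identity $C_k(\zeta+\zeta^{-1})=\zeta^k+\zeta^{-k}$ from the Remark. For negative $k$ this identity still holds, since the recursion gives $C_{-k}=C_k$. This proves the first claim, i.e.\ $\mathrm{tr}_\gamma=C_k\circ\mathrm{tr}_\mu$ on $X^{red}(G_{p,\, q})$.

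For the second claim, we compute the homology classes of $\alpha$ and $\beta$ from the presentation~\eqref{eq:pres_torusknotgr}. Abelianizing gives $H_1=\langle a,b \mid pa=qb\rangle$, which is $\Z$ since $\gcd(p,q)=1$. It is generated by $[\mu]=-r a+s b$. We can send $a\mapsto q$ and $b\mapsto p$, which is consistent with $pa=qb$. Then $[\mu]\mapsto -rq+sp=1$, so this is an isomorphism sending $[\mu]$ to $1$. Therefore $[\alpha]=[\mu^q]$ and $[\beta]=[\mu^p]$, and the first part gives $\mathrm{tr}_\alpha=C_q\circ\mathrm{tr}_\mu$ and $\mathrm{tr}_\beta=C_p\circ\mathrm{tr}_\mu$.

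The only mildly delicate point is the claim that reducible characters coincide with characters of diagonal, hence abelian, representations; this is standard via upper triangularization. Everything else is direct computation.
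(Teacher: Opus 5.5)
Your proposal is correct and takes essentially the same route as the paper. Like the paper, you pass to a triangular representative (you go one step further to a diagonal one), so that the character factors through $H_1(E_{p,\,q};\Z)=\langle[\mu]\rangle$, apply $C_k(m+m^{-1})=m^k+m^{-k}$, and use $\alpha^p=\beta^q$ with $ps-qr=1$ to get $[\alpha]=[\mu^q]$ and $[\beta]=[\mu^p]$. Your explicit isomorphism $a\mapsto q$, $b\mapsto p$ and your check that $C_{-k}=C_k$ (covering negative $k$) are slightly more careful versions of the paper's steps.
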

\begin{proof}
  When we set $\chi_\rho(\mu) = m + m^{-1}$, the value $\chi_\rho(\mu^k)$ is given by $C_k(m+m^{-1})$. The character $\chi_\rho$ of a reducible $\SL$-representation factors through $H_1(E_{p,\, q}; \Z)=\langle [\mu] \rangle$ since we can assume the image of $\rho$ is contained in upper triangular matrices in $\SL$. The character $\chi_\rho$ sends $\gamma$ to $C_k(m+m^{-1})$ for any $\gamma \in G_{p,\, q}$ in the homology class  $[\mu^k]$.

  It is enough to show that the homology classes $[\alpha]$ and $[\beta]$ are given by $[\mu^q]$ and $[\mu^p]$ for $\mathrm{tr}_{\alpha} = C_q \circ \mathrm{tr}_{\mu}$ and $\mathrm{tr}_{\beta} = C_p \circ \mathrm{tr}_{\mu}$.
  The relation $\alpha^p = \beta^q$ implies $[\alpha]=[\mu^{kq}]$ and $[\beta]=[\mu^{kp}]$. By $\mu=\alpha^{-r}\beta^{s}$ for $ps-qr=1$ we have $k=1$.
\end{proof}

We review a notion of \textit{Chebyshev curves}, which are plane algebraic curves defined by two Chebyshev polynomials.
\begin{definition}[Chebyshev curve of type $(p,\, q)$]
  For integers $p$ and $q$, we define the Chebyshev curve $C_{p,\, q}$ of type $(p,\, q)$ on $\C^2$ as
  \[C_{p,\, q} = \{(X, Y) \in \C^2 \,|\, C_p(X) - C_q(Y) = 0\}\]
\end{definition}
\begin{figure}[ht]
  \includegraphics[width=10cm]{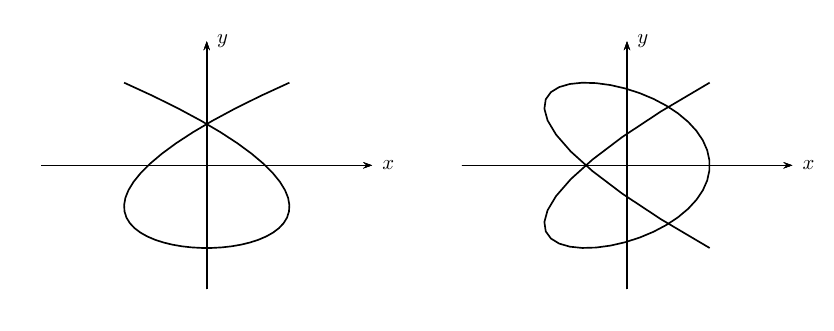}
  \caption{Chebyshev curves of type $(2, 3)$ and $(3, 4)$}
\end{figure}
We can find fundamental properties of Chebyshev curves in~\cite[Sec.~3.9]{Fis}.
\begin{remark}
  If $p$ and $q$ are coprime, the Chebyshev curve $C_{p,\, q}$ is irreducible.
\end{remark}
For coprime $p$ and $q$, the Chebyshev curve has $(p-1)(q-1)/2$ nodes (double points) as singularities. These singular points are given by 
\[
\left\{\left.
      \left(2\cos \frac{a \pi}{p}, 2\cos \frac{b\pi}{q} \right) \,\right|\,
      0<a<p, \quad 0<b<q, \quad  a \equiv b \pmod{2}
\right\}
\]
from the system of $F(X, Y)=0$, $F_X(X, Y)=0$ and $F_Y(X, Y)=0$ where $F(X, Y)=C_p(X)-C_q(Y)$.
For the details, we refer the reader to~\cite[Sec.~3.9]{Fis}.
We denote by $\mathrm{Sing}(C_{p,\, q})$ the set of singular points above.

The solutions of the system $F_X(X, Y)=0$ and $F_Y(X, Y)=0$ as above give $(p-1)(q-1)$ points $\{(2\cos a \pi / p, 2\cos b \pi / q) \,|\, 0<a<p, 0<b<q \}$. 
We blow up $\C^2$ on these $(p-1)(q-1)$ points and give a resolution of singularity for the Chebyshev curve $C_{p,\, q}$.

We write $B$ for the resulting surface by the blowing-up of $\C^2$ on $(p-1)(q-1)$ points $\{(2\cos a \pi / p, 2\cos b \pi / q) \,|\, 0<a<p,\, 0<b<q \}$ and $D_{p,\, q}$ for the resolution of the Chebyshev curve $C_{p,\, q}$. Summarizing,
we can express the resulting surface $B$ and the non-singular curve $D_{p,\, q}$ as 
\[ B = \{(X, Y, [Z_0 : Z_1]) \in \C^2 \times \mathbb{P}^1 \,|\, F_X(X, Y)Z_0 = -F_Y(X, Y)Z_1 \}\]
where $F(X, Y) = C_p(X) -C_q(Y)$ and 
\[ D_{p,\, q} = \overline{\pi^{-1} ( C_{p,\, q} \setminus \mathrm{Sing}(C_{p,\, q}))} \]
which is the closure of $\pi^{-1} ( C_{p,\, q} \setminus \mathrm{Sing}(C_{p,\, q}))$ by the projection $\pi: B \to \C^2$.
We can realize the closure of $\pi^{-1} ( C_{p,\, q} \setminus \mathrm{Sing}(C_{p,\, q}))$ by a parametrization of the Chebyshev curve as follows.
\begin{proposition}
  \label{prop:param_Dpq}
  The algebraic curve $D_{p,\, q}$ is parametrized as
  \[ D_{p,\, q} = \{(C_q(t), C_p(t), [C'_q(t) : C'_p(t)]) \in \C^2 \times \mathbb{P}^1\,|\, t \in \C\}.\]
  In particular, $D_{p,\, q}$ intersects with the exceptional line on $(2\cos (a\pi/p), 2\cos(b\pi/q))$ at $[q \sin(a\pi/p) : \pm p \sin (b\pi/q)] \in \mathbb{P}$.
\end{proposition}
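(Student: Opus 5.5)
The plan is to show that the polynomial map
\[\Phi:\C \to \C^2 \times \mathbb{P}^1,\qquad \Phi(t) = (C_q(t),\, C_p(t),\, [C'_q(t) : C'_p(t)])\]
is well defined, lands in $B$, and has image exactly $D_{p,\, q}$. We then compute the value of $\Phi$ at the preimages of the nodes.

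\emph{Step 1: $\Phi$ is well defined and lands in $C_{p,\,q}$.} By the composition rule $C_m \circ C_n = C_{mn}$, which follows from $C_k(\zeta+\zeta^{-1}) = \zeta^k+\zeta^{-k}$, we get $F(C_q(t), C_p(t)) = C_{pq}(t) - C_{pq}(t) = 0$. Next, $C'_k(t) = k\,S_{k-1}(t)$, whose zeros are $t = 2\cos(j\pi/k)$ with $0<j<k$. Since $p$ and $q$ are coprime, $j/q = i/p$ is impossible for $0<j<q$, $0<i<p$. Hence $C'_q(t)$ and $C'_p(t)$ never vanish simultaneously, and $\Phi$ is a regular map into $\C^2\times\mathbb{P}^1$.

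\emph{Step 2: $\Phi$ lands in $B$.} Here $F_X = C'_p(X)$ and $F_Y = -C'_q(Y)$. Differentiating the identity $F(C_q(t), C_p(t)) = 0$ in $t$ gives
\[F_X\cdot C'_q(t) + F_Y \cdot C'_p(t) = 0.\]
This is exactly the defining equation $F_X Z_0 = -F_Y Z_1$ of $B$ with $[Z_0:Z_1] = [C'_q(t):C'_p(t)]$.

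\emph{Step 3: the image is $D_{p,\,q}$.} First, $t\mapsto (C_q(t), C_p(t))$ is surjective onto $C_{p,\,q}$. Given a point of the curve, write $X = x+x^{-1}$ and $Y = y+y^{-1}$. Then $x^p + x^{-p} = y^q + y^{-q}$ forces $x^p = y^{\pm q}$, and after replacing $y$ by $y^{-1}$ if needed we have $x^p = y^q$. Coprimality then yields $w$ with $x = w^q$ and $y = w^p$, and we take $t = w + w^{-1}$.

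Over a smooth point of $C_{p,\,q}$ the fiber of $\pi$ in $B$ is a single point, so $\Phi(\C)$ contains $\pi^{-1}(C_{p,\,q}\setminus \mathrm{Sing}(C_{p,\,q}))$. Moreover $\Phi$ is proper, since its first two coordinates are nonconstant polynomials. Therefore $\Phi(\C)$ is a closed irreducible curve, and it coincides with the closure of that open set, namely $D_{p,\,q}$. The only delicate point is this closedness/properness argument, and I expect it to be the main (though mild) obstacle.

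\emph{Step 4: intersection with the exceptional lines.} Put $t = 2\cos\theta$. At a node we need $\cos(q\theta) = \cos(a\pi/p)$ and $\cos(p\theta) = \cos(b\pi/q)$. Using $S_{k-1}(2\cos\theta) = \sin(k\theta)/\sin\theta$, the $\mathbb{P}^1$-coordinate becomes
\[[q\sin(q\theta) : p\sin(p\theta)].\]
Substituting $\sin(q\theta) = \pm\sin(a\pi/p)$ and $\sin(p\theta) = \pm \sin(b\pi/q)$ gives $[q\sin(a\pi/p) : \pm p\sin(b\pi/q)]$. The two preimages of the node correspond to the two branches, and they produce the two opposite relative signs.
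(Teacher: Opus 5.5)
Steps 1--3 are fine. Step 2 is exactly the paper's argument. Steps 1 and 3 fill in points the paper passes over: coprimality keeps $[C'_q(t):C'_p(t)]$ well defined, and surjectivity via $w=x^{-r}y^{s}$ plus properness gives $\Phi(\C)=D_{p,\,q}$.

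The gap is the final sentence of Step 4. Your computation with $t=2\cos\theta$ only shows that any parameter over the node is sent to one of $[q\sin(a\pi/p):\pm p\sin(b\pi/q)]$. That is, $D_{p,\,q}$ meets the exceptional line \emph{within} this two-point set. That both points are actually hit is only asserted. You never show that the node has two preimages under $t\mapsto(C_q(t),C_p(t))$, nor that they carry opposite relative signs. This is the substantive part of the ``in particular'' claim, and it is needed later: in Theorem~\ref{thm:birational}, $L_{a,\,b}$ must be $\mathbb{P}^1$ minus both points and $\infty$ for $\Phi_{a,\,b}$ to be an isomorphism onto $\C_{a,\,b}$.

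The paper handles this by solving the node equations explicitly, in the lemma right after the proposition. In your notation, take $r,s$ with $ps-qr=1$ and set $\theta_\pm=ar\pi/p\pm bs\pi/q$. Then
\[
q\theta_\pm=(a\pm b)s\pi-\frac{a\pi}{p},\qquad p\theta_\pm=(a\pm b)r\pi\pm\frac{b\pi}{q}.
\]
Since $a\pm b$ is even, $t_\pm=2\cos\theta_\pm$ lies over the node. Moreover $[q\sin(q\theta_\pm):p\sin(p\theta_\pm)]=[q\sin(a\pi/p):\mp p\sin(b\pi/q)]$, so both signs occur. You should also record that $\sin\theta\neq0$ over a node, since $C_q(\pm2)\in\{\pm2\}$; this justifies dividing by $\sin\theta$.

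Alternatively, you can make your branch argument rigorous. At the node $F_{XY}=0$ and $F_{XX}F_{YY}\neq0$, because $S_{p-1}$ and $S_{q-1}$ have simple roots. So the tangent cone is two distinct lines, and the strict transform meets the exceptional line in two distinct points. Since $D_{p,\,q}=\Phi(\C)$ by Step~3, both are values of $\Phi$, and your computation then identifies them as the two candidates.
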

\begin{proof}
  The Chebyshev polynomials $C_k(z)$ satisfy $C_k(C_l(z)) = C_{kl}(z)$.
  The defining equation $F(X, Y)=C_p(X)-C_q(Y)=0$ of the Chebyshev curve $C_{p,\, q}$ has the solution $X=C_q(t)$ and $Y=C_p(t)$. Moreover we have $[Z_0 : Z_1] = [C'_q(t):C'_p(t)]$ by 
  \[F_X(C_q(t), C_p(t))C'_q(t)+F_Y(C_q(t), C_p(t))C'_p(t)=0\]
  on $C_{p,\, q}$.

  By next Lemma $(C_q(t), C_p(t))=(2\cos(a\pi/p), 2\cos(b\pi/q))$ implies $t=2\cos(ar\pi/p \pm bs\pi/q)$ where $a \equiv b \pmod{2}$ and  $ps-qr=1$. It follows from $C'_k(z) = k S_{k-1}(z)$ that
  \begin{align*}
    [C'_q(t) : C'_p(t)]
    &= \left[q \frac{\sin(aqr\pi/p \pm bs\pi)}{\sin (ar\pi/p \pm bs\pi / q)}
      : p \frac{\sin (ar\pi \pm bps\pi/q) }{\sin (ar\pi/p \pm bs\pi/q)}\right] \\
    &= [q\sin(a\pi/p) : \mp p \sin (b\pi/q)]
  \end{align*}
\end{proof}
\begin{lemma}
  The system of $C_q(t)=2\cos(a\pi/p)$ and $C_p(t)=2\cos(b\pi/q)$ for $a \equiv b \pmod{2}$ has the solution $t=2\cos (ar\pi/p \pm bs\pi/q)$ with $ps-qr=1$.
\end{lemma}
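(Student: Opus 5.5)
We substitute $t=\zeta+\zeta^{-1}$ with $\zeta=e^{\sqrt{-1}\theta}$ and use the identity $C_k(\zeta+\zeta^{-1})=\zeta^k+\zeta^{-k}$ from the Remark, which gives $C_k(2\cos\theta)=2\cos(k\theta)$. The claim then reduces to checking that
\[
\theta=\frac{ar\pi}{p}\pm\frac{bs\pi}{q}
\]
satisfies both $\cos(q\theta)=\cos(a\pi/p)$ and $\cos(p\theta)=\cos(b\pi/q)$. We compute $q\theta$ and $p\theta$ separately, using $ps-qr=1$ to rewrite $qr=ps-1$ and $ps=qr+1$.

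For the first equation,
\[
q\theta=\frac{aqr\pi}{p}\pm bs\pi=\frac{a(ps-1)\pi}{p}\pm bs\pi=-\frac{a\pi}{p}+(as\pm bs)\pi .
\]
Since $a\equiv b\pmod 2$, both $a+b$ and $a-b$ are even. Hence $(a\pm b)s\pi$ is an integer multiple of $2\pi$, and $\cos(q\theta)=\cos(-a\pi/p)=\cos(a\pi/p)$, so $C_q(t)=2\cos(a\pi/p)$.

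For the second equation,
\[
p\theta=ar\pi\pm\frac{bps\pi}{q}=ar\pi\pm\frac{b(qr+1)\pi}{q}=(a\pm b)r\pi\pm\frac{b\pi}{q}.
\]
Again $(a\pm b)r\pi\in 2\pi\Z$, so $\cos(p\theta)=\cos(\pm b\pi/q)=\cos(b\pi/q)$, and therefore $C_p(t)=2\cos(b\pi/q)$.

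The only delicate point is that the parity condition $a\equiv b\pmod 2$ is exactly what removes the leftover multiples of $\pi$. It is also worth noting, for later use in Proposition~\ref{prop:param_Dpq}, that the two sign choices correspond to the two branches through the node. If one also wants these to be the only solutions, a degree count suffices. The $(p-1)(q-1)/2$ nodes account for $(p-1)(q-1)$ parameter values. The remaining parameter values of $C_{p,q}$ are the simple points, and these are not mapped to nodes because the parametrization $t\mapsto(C_q(t),C_p(t))$ is birational onto the curve. The statement as given, however, only asks for existence, and that is the verification carried out above.
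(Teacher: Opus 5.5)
\textbf{Verdict: correct, by a different route.} Your verification is correct and complete for the statement as literally posed. Using $qr=ps-1$ and $ps=qr+1$ you get $q\theta=-a\pi/p+(a\pm b)s\pi$ and $p\theta=\pm b\pi/q+(a\pm b)r\pi$, and $a\equiv b \pmod 2$ removes the extra multiples of $\pi$.

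\textbf{How the paper argues.} The paper runs the argument in the opposite direction:
\begin{enumerate}
\item It writes $t=m+m^{-1}$ and solves $m^q=e^{\pm a\pi\sqrt{-1}/p}$ and $m^p=e^{\pm b\pi\sqrt{-1}/q}$.
\item It reduces compatibility of the two resulting expressions for $m$ to the linear Diophantine equation $kp-\ell q=(-a\pm b)/2$.
\item It takes the particular solution $(k,\ell)=(s(-a\pm b)/2,\ r(-a\pm b)/2)$.
\end{enumerate}
Every other solution $(k+nq,\ \ell+np)$ gives the same $m$, and $m\mapsto m^{-1}$ does not change $t$. So this derivation determines the whole solution set. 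It also shows that the parity condition is necessary for any solution to exist.

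\textbf{What each approach buys.} The ``only these'' direction is what Proposition~\ref{prop:param_Dpq} actually uses (``implies $t=\dots$''). Your substitution is shorter and isolates exactly where parity enters. By itself, however, it only shows that the two values are solutions.

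\textbf{Your uniqueness addendum.} It is optional, but the reason you give is not sufficient. Birationality of $t\mapsto(C_q(t),C_p(t))$ does not by itself bound how many parameter values lie over a singular point.

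What works is the following:
\begin{itemize}
\item The map is finite and birational with smooth source $\C$, so it is the normalization. Hence the fibre over a point corresponds to the branches of the curve there.
\item The cited fact from Fischer says the singular points are ordinary double points, so each has exactly two branches.
\item The two values $t_\pm=2\cos(ar\pi/p\pm bs\pi/q)$ are distinct, since $t_+-t_-=-4\sin(ar\pi/p)\sin(bs\pi/q)\neq 0$ because $\gcd(r,p)=\gcd(s,q)=1$.
\end{itemize}
Together these give uniqueness. The paper's elementary computation reaches the same conclusion without any input about the singularities.
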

\begin{proof}
  If we set $t=m+m^{-1}$, then we have $m = e^{\pm a\pi \sqrt{-1}/(pq) + 2\pi k \sqrt{-1}/q}$ with some $k \in \Z$ by
  \[C_q(t) = e^{a\pi \sqrt{-1}/p} + e^{-a\pi \sqrt{-1}/p}.\]
  Similarly we have $m = e^{\pm b\pi \sqrt{-1}/(pq) + 2\pi \ell \sqrt{-1}/p}$ with some $\ell \in \Z$ by $C_p(t) = 2\cos(b\pi/q)$.
  By $a/(pq) + 2k/q = \pm b /(pq) + 2\ell/p$ it holds that
  $kp - \ell q = (-a \pm b)/2$.
  We can set $k=s(-a \pm b)/2$ and $\ell = r(-a \pm b)/2$ by $ps-qr=1$,
  which implies $t=2\cos (ar\pi/p \pm bs\pi/q)$.
  Note that we have the same $t$ if we replace $s$ and $r$ with $s + dq$ and $r + dp$.
\end{proof}

\subsection{Birational model of character varieties for torus knots}
We provide a birational model of the character variety $X(G_{p,\, q})$ through blowing-up the Chebyshev curve $C_{p,\, q}$ at $(p-1)(q-1)/2$ points.
We use the same symbol $\pi$ for the restriction of $\pi : B (\subset \C^2 \times \mathbb{P}^1) \to \C^2$ to $B \cap \C^2 \times (\mathbb{P}^1 \setminus \{\infty = [1:0]\})$.
We set $Y_{p,\, q} = \pi^{-1}(C_{p,\, q})$ in $\C^2 \times (\mathbb{P}^1 \setminus \{\infty = [1:0]\})$. Our variety $Y_{p,\, q}$ is the union of the non-singular curve $D_{p,\, q}$ and $(p-1)(q-1)/2$ exceptional lines. We have seen in Proposition~\ref{prop:param_Dpq} that each exceptional line intersects with $D_{p,\, q}$ at two points. We will show that the decomposition $Y_{p,\, q} = D_{p,\, q} \cup (Y_{p,\, q} \setminus D_{p,\, q})$ provides a birational model of $X(G_{p,\, q})=X^{red}(G) \cup X^{irr}(G_{p,\, q})$.
\begin{theorem}
  \label{thm:birational}
  The character variety $X(G_{p,\, q})$ is birational to $Y_{p,\, q}$.
  More precisely, the reducible part $X^{red}(G_{p,\, q})$ is birational to $D_{p,\, q}$ and the irreducible part $X^{irr}(G_{p,\, q})$ is isomorphic to $Y_{p,\, q} \setminus D_{p,\, q}$.
\end{theorem}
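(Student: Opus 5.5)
The plan is to treat the two parts of $X(G_{p,\, q})$ separately and to match each with the corresponding part of $Y_{p,\, q}$, using the trace function $\mathrm{tr}_\mu$ as the common coordinate.

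\emph{Reducible part.} By the discussion before Lemma~\ref{lem:rationality_Xred}, $\mathrm{tr}_\mu$ identifies $X^{red}(G_{p,\, q})$ with $\C$. I would define
\[
\Phi : X^{red}(G_{p,\, q}) \to D_{p,\, q}, \qquad \chi_\rho \mapsto \bigl(\mathrm{tr}_\alpha(\chi_\rho),\, \mathrm{tr}_\beta(\chi_\rho),\, [C'_q(t) : C'_p(t)]\bigr), \quad t=\mathrm{tr}_\mu(\chi_\rho).
\]
Lemma~\ref{lem:rationality_Xred} gives $\mathrm{tr}_\alpha = C_q(t)$ and $\mathrm{tr}_\beta = C_p(t)$. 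By Proposition~\ref{prop:param_Dpq}, $\Phi$ is exactly the parametrization $t \mapsto (C_q(t), C_p(t), [C'_q(t):C'_p(t)])$ of $D_{p,\, q}$, so it is a surjective morphism $\C \to D_{p,\, q}$.

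To get birationality I would show this map is generically injective. Suppose $(C_q(t), C_p(t)) = (C_q(t'), C_p(t'))$ with $t \ne t'$. Then either the common point is one of the nodes, which happens for only finitely many $t$, or $C_q(t)=C_q(t')$ and $C_p(t)=C_p(t')$ at a smooth point of $C_{p,\, q}$. Writing $t = m+m^{-1}$ and $t' = m'+m'^{-1}$, the second case gives $m'^q = m^{\pm q}$ and $m'^p = m^{\pm p}$ for the same choice of sign. Since $\gcd(p,q)=1$, this forces $m' = m^{\pm 1}$, hence $t' = t$. Equivalently, the normalization of the irreducible rational curve $C_{p,\, q}$ is $\C$ via $t$, and $D_{p,\, q}$ is that normalization, so $\Phi$ is an isomorphism off finitely many points.

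\emph{Irreducible part.} $Y_{p,\, q}\setminus D_{p,\, q}$ is the disjoint union, over the nodes $(2\cos(a\pi/p), 2\cos(b\pi/q))$ with $a\equiv b \pmod 2$, of the exceptional lines $\{\text{node}\}\times(\mathbb{P}^1\setminus\{\infty\})\cong \C$ with the two points of Proposition~\ref{prop:param_Dpq} removed. Call the component over the node indexed by $(a,b)$ $L_{a,b}$. The number of such nodes matches the number $(p-1)(q-1)/2$ of components $\C_{a,b}$ given by Johnson.

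On $\C_{a,b}$, the Example shows that $\mathrm{tr}_\mu$ is an affine function of the parameter $u$, namely $2\cos(-ar\pi/p+bs\pi/q) - u\,\kappa$ with $\kappa\ne 0$, with the two excluded values $u=0$ and $u=4\sin(a\pi/p)\sin(b\pi/q)$. I would map $\C_{a,b}$ to $L_{a,b}$ by sending $u$ to the fiber coordinate $z=Z_1/Z_0$ through the unique affine map that carries these two excluded values to the two intersection points $[q\sin(a\pi/p) : \pm p\sin(b\pi/q)]$ with $D_{p,\, q}$. This map is an isomorphism $\C\setminus\{2\text{ pts}\}\to\C\setminus\{2\text{ pts}\}$. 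Taking the disjoint union over $(a,b)$ gives $X^{irr}(G_{p,\, q})\cong Y_{p,\, q}\setminus D_{p,\, q}$.

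\emph{Anticipated difficulties.} I do not expect either step to be hard. The first point needing care is the nondegeneracy $\kappa\neq0$, i.e.\ $\sin(ar\pi/p)\ne0$ and $\sin(bs\pi/q)\ne 0$. This follows from $\gcd(r,p)=1$ and $\gcd(s,q)=1$, which hold because $ps-qr=1$. The second point is the compatibility check that the excluded values of $u$ correspond to the points $D_{p,\, q}\cap L_{a,b}$. Under $\mathrm{tr}_\mu$ the excluded values become $t=2\cos(ar\pi/p\mp bs\pi/q)$, which by the Lemma preceding this subsection are exactly the parameters at which $D_{p,\, q}$ meets the exceptional line. I would make the affine identification use this matching directly, so that the limits of irreducible characters glue consistently to the reducible component.
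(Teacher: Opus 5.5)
Your proposal is correct and follows the paper's proof. On $X^{red}(G_{p,\,q})$ you use the same map $t\mapsto (C_q(t),C_p(t),[C'_q(t):C'_p(t)])$; your generic-injectivity argument supplies the degree-one check that the paper only asserts after noting both curves are rational. On each $\C_{a,b}$ you use an affine identification carrying the two excluded values to the two points of $D_{p,\,q}$ on the exceptional line, which is the paper's map $\Phi_{a,b}$ written in the coordinate $u$ instead of $\mathrm{tr}_\mu$.

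One small correction: the paper deletes $\infty=[1:0]$, so the affine fiber coordinate on $L_{a,b}$ must be $Z_0/Z_1$ rather than $Z_1/Z_0$. An affine map into $Z_1/Z_0$ would hit the deleted point $[1:0]$ and miss $[0:1]$.
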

\begin{proof}
  We show that both of $X^{red}(G_{p,\, q})$ and $D_{p,\, q}$ are rational curves.
  Here a rational curve means that the function field of the curve is the field of rational functions in one indeterminate.
  Set $t=\mathrm{tr}_{\mu}$.
  Lemma~\ref{lem:rationality_Xred} implies that the function field of $X^{red}(G_{p,\, q})$ is $\C(t)$. Thus $X^{red}(G_{p,\, q})$ is a rational curve.
  It is known that an algebraic curve has a parametrization if and only if it is a rational curve.
  We can see that $D_{p,\, q}$ is also a rational curve by the parametrization of $D_{p,\, q}$ in Proposition~\ref{prop:param_Dpq} 

  Actually we define a map $\Phi^{red}: X^{red}(G_{p,\, q}) \to D_{p,\, q}$ by the triple of the trace functions
  $\mathrm{tr}_\alpha = C_q(t)$, $\mathrm{tr}_\beta = C_p(t)$ and $\mathrm{tr}_\mu = t$ as
  \[\Phi^{red}(\chi_\rho) = (\mathrm{tr}_\alpha, \mathrm{tr}_\beta, [C'_q(t):C'_p(t)]) = (C_q(t), C_p(t), [C'_q(t):C'_p(t)])\]
  under $C'_q(t) \not = 0$.
  We can regard $\Phi^{red}$ as a birational morphism from $X^{red}(G_{p,\, q})$ to $D_{p,\, q}$.

  Set $L_{a, b} = \pi^{-1}((2\cos(a\pi/p), 2\cos(b\pi/q))) \setminus D_{p,\, q}$.
  According to Proposition~\ref{prop:param_Dpq}, $L_{a, b}$ is expressed as $L_{a, b} = \mathbb{P}^1 \setminus \{[q \sin(a\pi/p) : \pm p \sin (b\pi/q)], \, \infty = [1:0]\}$.
  We define a map $\Phi_{a, b}$ from $\mathbb{P}^1$ to $\mathbb{P}^1$ as the following composition:
  \begin{align*}
    [Z_0:Z_1]
    &\mapsto
    w=\frac{(p \sin (b\pi/q))Z_0 + (q \sin (a\pi/p))Z_1}
           {(p \sin (b\pi/q))Z_0 - (q \sin (a\pi/p))Z_1}\\
    &\mapsto 2 \cos (ar\pi/p - bs\pi/q)
      - 4 \sin(ar\pi/p)\sin(bs\pi/q) \frac{w}{w-1}.
  \end{align*}
  The map $\Phi_{a, b}$ sends
  \begin{align*}
    [ q \sin (a\pi/p) : -p \sin (b\pi/q)] &\mapsto 0 \mapsto 2 \cos(ar\pi/p - bs\pi/q); \\
    [ q \sin (a\pi/p) : p \sin (b\pi/q)] &\mapsto \infty \mapsto 2 \cos(ar\pi/p + bs\pi/q); \\
    \infty=[1:0] & \mapsto 1 \mapsto \infty.
  \end{align*}
  Therefore our map $\Phi_{a, b}$ gives an isomorphism from $L_{a, b}$ to $\C_{a, b}$ in $X^{irr}(G_{p,\, q})$.
\end{proof}

\section{Adjoint Reidemeister torsion for torus knot exteriors}
We review the adjoint Reidemeister torsion for torus knot exteriors.
The adjoint Reidemeister torsion is defined for $E_{p,\, q}$ with a closed curve $\gamma$ on the boundary torus $\bd E_{p,\, q}$ and
an $\SL$-representation $\rho$ of $G_{p,\, q}=\pi_1(E_{p,\, q})$ such that the homology class $[\gamma]$ in $H_1(\bd E_{p,\, q};\Z)$ is nontrivial and $\rho$ is irreducible.
We choose $\gamma$ as a meridian $\mu$ in this paper.
The adjoint Reidemeister torsion is regarded as a function on the set of irreducible $\SL$-representations $\rho$.
More precisely, we need to choose $\mu$-regular $\SL$-representations to define the adjoint Reidemeister torsion.
However all irreducible $\SL$-representations of any torus knot group are $\mu$-regular.
We denote by $\Tor{\rho}$ the adjoint Reidemeister torsion of the torus knot exterior $E_{p,\, q}$ simply since we only consider torus knot exteriors. For the detail definition, see~\cite{Dubois1, Porti}.
The adjoint Reidemeister torsion is invariant under conjugation of $\rho$.
We think of the adjoint Reidemeister torsion as a function on the set of conjugacy classes of irreducible $\SL$-representations of $G_{p,\, q}$.

We also review the sum of the adjoint Reidemeister torsions on the level set of $\mathrm{tr}_\mu$.
The adjoint Reidemeister torsion $\Tor{\rho}$ has the constant value
\[
  \Tor{\rho}
  = \frac{pq}{16\sin^2 (\pi a/p) \sin^2(\pi b/q)}
\]
on the component $\C_{a, b}$.
According to \cite{GKY, Yoon, Yoon2, TranY},
the adjoint Reidemeister torsion for hyperbolic $3$-manifolds can be described with the Jacobian determinant of defining polynomials for the character varieties.
The irreducible part $X^{irr}(G_{p,\, q})$ is isomorphic to $Y_{p,\, q} \setminus D_{p,\, q}$ in Theorem~\ref{thm:birational}. $Y_{p,\, q} \setminus D_{p,\, q}$ is contained in the exceptional lines defined by $F_X=0$ and $F_Y=0$ where $F(X, Y)=C_p(X)-C_q(Y)$. 
We can recover the adjoint Reidemeister torsion of $E_{p,\, q}$ with $\mu$ from the Jacobian determinant of defining polynomials for $Y_{p,\, q} \setminus D_{p,\, q}$.
\begin{theorem}
  \label{thm:RtorsionHessian}
  The adjoint Reidemeister torsion $\Tor{\rho}$ on $\C_{a, b} \simeq L_{a, b}$ is expressed as
  \[
    \Tor{\rho} = \frac{-1}{4pq} \det
    \left.
      \left(\frac{\partial (F_X, F_Y)}{\partial (X, Y)} \right)
    \right|_{(X, Y)=(2\cos (a\pi/p), 2\cos (b\pi/q))}.
  \]
\end{theorem}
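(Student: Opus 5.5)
Since $F(X,Y)=C_p(X)-C_q(Y)$ separates the variables, the mixed partials $F_{XY}$ and $F_{YX}$ vanish identically. The Jacobian determinant $\partial(F_X,F_Y)/\partial(X,Y)$ is therefore the diagonal product $C_p''(X)\cdot(-C_q''(Y))$. The plan is to compute these two second derivatives explicitly at $X=2\cos(a\pi/p)$ and $Y=2\cos(b\pi/q)$. We then compare the result with the known constant value
\[
\Tor{\rho}=\frac{pq}{16\sin^2(\pi a/p)\sin^2(\pi b/q)}
\]
on $\C_{a,b}$, recalled just before the statement. By Theorem~\ref{thm:birational}, $\C_{a,b}\simeq L_{a,b}$ lies over exactly this critical point, so it suffices to check the identity there.

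To compute $C_k''$, write $z=\zeta+\zeta^{-1}$ with $\zeta=e^{\sqrt{-1}\theta}$, so $z=2\cos\theta$ and $C_k(z)=2\cos k\theta$. Differentiating with respect to $\theta$ gives $C_k'(z)\cdot(-2\sin\theta)=-2k\sin k\theta$. Hence
\[
C_k'(z)=\frac{k\sin k\theta}{\sin\theta},
\]
which is consistent with $C_k'=kS_{k-1}$ as used in Proposition~\ref{prop:param_Dpq}. Differentiating once more gives
\[
C_k''(z)\cdot(-2\sin\theta)=\frac{k^2\cos k\theta\,\sin\theta-k\sin k\theta\cos\theta}{\sin^2\theta}.
\]
At a critical point $\theta=a\pi/k$ with $0<a<k$, we have $\sin k\theta=0$ and $\cos k\theta=(-1)^a$. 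The formula therefore collapses to
\[
C_k''(2\cos(a\pi/k))=\frac{-(-1)^a k^2}{2\sin^2(a\pi/k)}.
\]
Equivalently, one can use the Chebyshev differential equation $(z^2-4)C_k''+zC_k'-k^2C_k=0$. At a critical point $C_k'=0$, so $C_k''=k^2C_k/(z^2-4)=k^2\cdot2(-1)^a/(-4\sin^2\theta)$, which gives the same value. This is a good cross-check.

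We substitute $k=p$ with $\theta=a\pi/p$, and $k=q$ with $\theta=b\pi/q$. The determinant becomes
\[
-C_p''C_q''=-\frac{(-1)^{a+b}p^2q^2}{4\sin^2(a\pi/p)\sin^2(b\pi/q)}.
\]
Because $a\equiv b\pmod 2$, the sign $(-1)^{a+b}$ equals $1$. Multiplying by $-1/(4pq)$ then yields $pq/(16\sin^2(a\pi/p)\sin^2(b\pi/q))$, which is exactly $\Tor{\rho}$.

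The only real obstacle is bookkeeping. We need the sign conventions for $F_Y=-C_q'(Y)$ and the parity sign to be right, and we must use the normalization $C_0=2$ of the paper's Chebyshev polynomials rather than the classical $T_k$. The parity condition $a\equiv b\pmod 2$, which defines the components $\C_{a,b}$, is precisely what kills the sign. The identification of $\C_{a,b}$ with $L_{a,b}$ sitting over $(2\cos(a\pi/p),2\cos(b\pi/q))$ is supplied by Theorem~\ref{thm:birational}.
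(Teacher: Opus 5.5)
Your proof is correct and is essentially the paper's argument. The paper also reads off the diagonal Jacobian of the separated polynomial $F$ and evaluates $C_k''$ at the critical points using $C_k'=kS_{k-1}$ and $S_{k-1}'=(kC_k-zS_{k-1})/(z^2-4)$, which is exactly your Chebyshev-ODE cross-check; it then uses $a\equiv b \pmod 2$ to remove the sign and compares with the recalled value $pq/(16\sin^2(\pi a/p)\sin^2(\pi b/q))$.
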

\begin{proof}
  By $C'_k(z)=k S_{k-1}(z)$ and $S'_{k-1}(z) = (k C_k(z)-zS_{k-1}(z))/(z^2-4)$,
  we have
  \begin{align*}
    \det
    \left.
    \left(\frac{\partial (F_X, F_Y)}{\partial (X, Y)} \right)
    \right|_{(X, Y)=(2\cos (a\pi/p), 2\cos (b\pi/q))}
    &=
      \det
      \begin{pmatrix}
        \frac{(-1)^a p^2}{-2 \sin^2 (a\pi/p)} & 0 \\
        0 & -\frac{(-1)^b q^2}{-2 \sin^2 (b\pi/q)}
      \end{pmatrix}\\
    &=-\frac{p^2 q^2}{4 \sin^2(a\pi/p) \sin^2(b\pi/q)}.
  \end{align*}
  The last equality follows from $a \equiv b \pmod{2}$.
\end{proof}

The level set of $\mathrm{tr}_\mu$ at a generic $c \in \C$ consists of $(p-1)(q-1)/2$ points.
According to the conjectures in~\cite{GKY},
we consider the sum of $(g-1)$st power of twice the adjoint Reidemeister torsion $\Tor{\rho}$ on the level set of $\mathrm{tr}_\mu$ at a generic $c \in \C$.
This sum is expressed as
\begin{align}
  \sum_{[\rho] \in \mathrm{tr}_\mu^{-1}(c)} (2\Tor{\rho})^{g-1}
  &= \sum_{\substack{0 < a < p \\ 0 < b < q \\ a \equiv b \pmod{2}}}
  \left(\frac{pq}{8\sin^2 (\pi a/p) \sin^2(\pi b/q)}\right)^{g-1} \notag \\
  &= \frac{1}{2} \sum_{\substack{0 < a < p \\ 0 < b < q}}
  \left(\frac{pq}{8\sin^2 (\pi a/p) \sin^2(\pi b/q)}\right)^{g-1}.
  \label{eq:sum_torsions}
\end{align}
The last equality follows from $\sin ( \pi b / q) = \sin ( \pi (q - b) / q)$
and $q-b$ has the different parity from $b$.

In the case of $g=0$, the sum~\eqref{eq:sum_torsions} always equals $1$ (see~\cite{TranY} for the details).
We will show that the sum~\eqref{eq:sum_torsions} for any torus knot exterior $E_{p,\, q}$ and $g>0$ turns into an integer by using Verlinde formulas.
We put the simplest example below.

\begin{example}
  The sum~\eqref{eq:sum_torsions} for the trefoil knot exterior $E_{2, 3}$ is expressed as follows:
  \begin{align*}
    \sum_{[\rho] \in \mathrm{tr}_\mu^{-1}(c)} (2\Tor{\rho})^{g-1}
    &= \left(\frac{6}{8\sin^2 (\pi/2)\sin^2(\pi/3)}\right)^{g-1}\\
    &=1
  \end{align*}
\end{example}

\section{Verlinde numbers for torus knot exteriors}
In this section, first, we introduce the modular $S$-matrix for torus knot $T_{p,\, q}$ which is a generalization of the modular $S$-matrix in $\mathrm{SU}(2)$ Wess-Zumino-Witten model. One can also find other motivations in minimal string theory~\cite{KOPSeibergShih}. 
The $S$-matrix $S=\{S_{(m,\, n),\, (r,\, s)}\}$ $(0 < m, r < p,\, 0 < n, s < q)$ in \cite[Eq.~(3.13)]{KOPSeibergShih} was defined as
\[S_{(m,\, n),\ (r,\, s)} = -\sqrt{\frac{8}{pq}} (-1)^{sm+rn} \sin\left(\frac{\pi snp}{q}\right) \sin\left(\frac{\pi rmq}{p}\right)\]
which comes from the singular points on the Chebyshev curve $C_{p,\, q}$.
Our $S$-matrix looks like a slight modification of the parametrization over the singular points as follows.
\begin{definition}
    For integers $i,\, j,\, a,\, b$ satisfying $0 < i, a < p$ and $0 < j,\, b < q$, we define the modular $S$-matrix $S=\{ S_{(i,\, j),\, (a,\, b)} \}$ by 
    \begin{equation}
        S_{(i,\, j),\, (a,\, b)}:= \sqrt{\frac{8}{pq}} \sin \left( \frac{i a \pi}{p} \right) \sin \left( \frac{j b \pi}{q} \right)
    \end{equation}
\end{definition}
\begin{remark}
  For $p=k+2,\ q=2$, this matrix $S$ is essentially equal to the modular $S$-matrix multiplied by $\sqrt{2}$
  in $\mathrm{SU}(2)$ Wess-Zumino-Witten model with level $k$,
  temporarily dropping the convention that $q$ is odd.
\end{remark}
\begin{remark}
  For $(a,\, b)=(1,\, 1)$, the number $S_{(i,\, j),\, (1,\, 1)}$ is essentially equal to the adjoint Reidemeister torsion of $T_{p,\, q}$. More precisely it holds $2\Tor{\rho} = S_{(i,\, j),\, (1,\, 1)}^{-2}$.
  This fact is closely related to the interesting program~\cite{CGK, CQW} to get a modular tensor category from a $3$-manifold based on the $3$d-$3$d correspondence \cite{TY1, TY2, DGG, CCV}.  
\end{remark}
Second, we consider an extension of the Verlinde number for a closed surface of genus $g$ with $n$-points by using the modular $S$-matrix. 
    For $n$-pairs $(a_k,\, b_k)$ $(1 \leq k \leq n)$ of two integers satisfying $0 < a_1, \dots , a_n  < p$ and $0 < b_1, \dots , b_n < q$, we define the following extension of Verlinde number for a closed surface of genus $g$ with $n$-points by
    \begin{align}
      N_g((a_1,\, b_1), \dots , (a_n,\, b_n))
      &:= \sum_{\substack{0 < i < p \\ 0 < j < q}}
      S_{(i,\, j),\, (1,\, 1)}^{2-2g}
      \prod_{k=1}^{n} \frac{S_{(i,\, j),\, (a_k,\, b_k)}}{S_{(i,\, j),\, (1,\, 1)}} \\
      &= \sum_{\substack{0 < i < p \\ 0 < j < q}}
      \left(
      \frac{pq}{8 \sin^2 (i \pi / p) \sin^2 (j \pi / q)}
      \right)^{g-1}
      \prod_{k=1}^{n}\frac{\sin(i a_k \pi / p) \sin(j b_k \pi / q)}{\sin(i\pi / p) \sin(j\pi/q)} \notag
    \end{align}
We can regard $n$-pairs of two integers as a set of multiplicities $n_{a, b}$ of indices $(a, b)$ for $0 < a < p$ and $0 < b < q$ such that $n_{1,\, 1} + \dots + n_{p-1,\, q-1} = n$.
We also define the Verlinde numbers of a torus knot exterior
by adding some coefficients of multiplicity in the extension of Verlinde number for a closed surface above.
Our Verlinde numbers of a torus knot exterior will show that the sum~\eqref{eq:sum_torsions} of the adjoint Reidemeister torsions turns into an integer. These numbers are generalizations of the Verlinde numbers in $\SU$ Wess-Zumino-Witten models. Various definitions and nice properties are known for the Verlinde numbers in Wess-Zumino-Witten models. Good references include \cite{BT, J, S, T, Z}. In particular, the proof of the integrality of the generalized Verlinde numbers for torus knots in this paper refers to the discussion of Verlinde numbers in \cite{Z}.
\begin{definition}
  We define the Verlinde number $d(g, \bm{n})$ of the $(p,\, q)$-torus knot exterior
  for $(g,\, \bm{n} = \sum_{0<a<p,\, 0<b<q} n_{a,\, b} \bm{\ell}_{a,\, b}) \in \Z_{\geq 0} \times S_{p,\, q}$\, ($S_{p,\, q} = \Z_{\geq 0} \, \bm{\ell}_{1,\, 1} + \cdots + \Z_{\geq 0} \, \bm{\ell}_{p-1,\, q-1}$) as
  \begin{equation}
    d(g, \bm{n})
    =\left(\frac{1}{4}\right)^{g-1}
    \left(\frac{1}{2}\right)^{|\bm{n}|}
    \sum_{\substack{0 < i < p \\ 0 < j < q}}
      \left(\frac{p}{2\sin^2 \frac{\pi i}{p}} \frac{q}{2\sin^2 \frac{\pi j}{q}}\right)^{g-1}
    \prod_{\substack{0 < a < p \\ 0 < b < q}}
      \left(\frac{\sin \frac{\pi i a}{p}}{\sin \frac{\pi i}{p}}\frac{\sin \frac{\pi j b}{q}}{\sin \frac{\pi j}{q}}\right)^{n_{a,\, b}}
  \end{equation}
  where $|\bm{n}| = \sum_{0<a<p,\, 0<b<q} n_{a,\, b}$ (called the \textit{weight of $\bm{n}$}).
\end{definition}

By definition, the Verlinde number $d(g, \bm{0})$ satisfies
\[
  d(g, \bm{0})=
  2 \sum_{[\rho] \in \mathrm{tr}_\mu^{-1}(c)} (\Tor{\rho})^{g-1}.
\]

\begin{remark}
  In the case of $g=0$,
  the sum in the right hand side above turns into $2$ for all torus knots (see~\cite{TranY}).
  Hence it holds 
  \[d(0, \bm{0})=4.\]
  It is easy to see that the Verlinde number for $g=1$ and $\bm{n}=\bm{0}$ turns out to be
  \[d(1, \bm{0})=(p-1)(q-1).\]

\end{remark}

The sum~\eqref{eq:sum_torsions} of the adjoint Reidemeister torsions equals $2^{g-2} d(g, \bm{0})$.
We will show that $2^{g-2} d(g, \bm{0})$ is an integer
according to the following recurrence formulas of the Verlinde number $d(g, \bm{n})$.

\section{Fusion rules and the sum of the adjoint Reidemeister torsion}
To show that $2^{g-2} d(g, \bm{0})$ is an integer,
we provide the recurrence formulas of our Verlinde numbers $d(g, \bm{n})$ which are often referred to as the \textit{Fusion rules}.
\begin{proposition}[Fusion rules]
  \label{prop:Fusionrules}
  The Verlinde numbers $d(g, \bm{n})$ of the $(p,\, q)$-torus knot exterior satisfy the following recurrence formulas:
  \begin{enumerate}
  \item \label{item:fusionI}
    $\displaystyle \sum_{\substack{0<a<p \\ 0<b<q}}
    d(g, \bm{n}+2\bm{\ell}_{a,\, b})=d(g+1, \bm{n})$
  \item \label{item:fusionII}
    $\displaystyle \sum_{\substack{0<a<p \\ 0<b<q}}
    d(g, \bm{n}+\bm{\ell}_{a,\, b}) d(g', \bm{n}'+\bm{\ell}_{a,\, b})=d(g+g', \bm{n}+\bm{n}')$
  \end{enumerate}
\end{proposition}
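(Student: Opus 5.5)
The plan is to verify both fusion rules by a direct computation from the definition of $d(g,\bm{n})$. The only input is the orthogonality of the discrete sine transform. To keep notation light, write for $0<i<p$, $0<j<q$
\[
  w_{i,j} = \frac{p}{2\sin^2\frac{\pi i}{p}}\cdot\frac{q}{2\sin^2\frac{\pi j}{q}},
  \qquad
  \phi_{a,b}(i,j) = \frac{\sin\frac{\pi i a}{p}}{\sin\frac{\pi i}{p}}\cdot\frac{\sin\frac{\pi j b}{q}}{\sin\frac{\pi j}{q}} .
\]
With $\Phi_{\bm n}(i,j)=\prod \phi_{a,b}(i,j)^{n_{a,b}}$ this gives
\[
  d(g,\bm n)=4^{1-g}\,2^{-|\bm n|}\sum_{i,j} w_{i,j}^{\,g-1}\,\Phi_{\bm n}(i,j).
\]
The key identity I will establish first is the completeness relation
\[
  \sum_{0<a<p,\ 0<b<q}\phi_{a,b}(i,j)\,\phi_{a,b}(i',j') = \delta_{i i'}\delta_{j j'}\, w_{i,j}.
\]
It factors into a $p$-part and a $q$-part. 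Each part follows from the classical orthogonality
\[
  \sum_{a=1}^{p-1}\sin\frac{\pi i a}{p}\sin\frac{\pi i' a}{p}=\frac{p}{2}\,\delta_{ii'}
  \qquad (0<i,i'<p).
\]
This orthogonality is proved by writing the product of sines as a difference of cosines and summing geometric series of roots of unity. Dividing by $\sin\frac{\pi i}{p}\sin\frac{\pi i'}{p}$ then gives $\delta_{ii'}\,p/(2\sin^2\frac{\pi i}{p})$, and multiplying the $p$- and $q$-parts yields $w_{i,j}\delta\delta$. This is the step where the normalization constants have to come out exactly right, so I will check it carefully; everything else is bookkeeping.

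For rule (\ref{item:fusionI}), I substitute the definition into the left side:
\[
  \sum_{a,b} d(g,\bm n+2\bm\ell_{a,b}) = 4^{1-g}2^{-|\bm n|-2}\sum_{i,j} w_{i,j}^{g-1}\Phi_{\bm n}(i,j)\sum_{a,b}\phi_{a,b}(i,j)^2 .
\]
Applying the completeness relation with $(i',j')=(i,j)$, the inner sum equals $w_{i,j}$. The prefactor becomes $4^{1-g}\cdot\tfrac14\cdot 2^{-|\bm n|}=4^{1-(g+1)}2^{-|\bm n|}$. The result is therefore exactly $d(g+1,\bm n)$.

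For rule (\ref{item:fusionII}), I expand the product of the two sums as a double sum over $(i,j)$ and $(i',j')$. The sum over $(a,b)$ of $\phi_{a,b}(i,j)\phi_{a,b}(i',j')$ collapses to $\delta\delta\, w_{i,j}$. What remains is
\[
  4^{1-g}4^{1-g'}2^{-|\bm n|-|\bm n'|-2}\sum_{i,j} w_{i,j}^{g+g'-1}\Phi_{\bm n}\Phi_{\bm n'}(i,j).
\]
Here $\Phi_{\bm n}\Phi_{\bm n'}=\Phi_{\bm n+\bm n'}$, and the constants combine to $4^{1-(g+g')}2^{-|\bm n+\bm n'|}$. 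This gives $d(g+g',\bm n+\bm n')$.

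Since the powers $w^{g-1}$ may have negative exponent only when $g=0$, and $w_{i,j}\neq 0$, no division issues arise. I therefore expect no obstacle beyond the orthogonality computation.
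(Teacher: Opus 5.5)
Your proposal is correct and takes essentially the same approach as the paper. The paper also substitutes the definition and collapses the sum over $(a,b)$ using the completeness relation of Lemma~\ref{lem:Zagier}, which is your sine orthogonality written via $S_{a-1}(\zeta^i+\zeta^{-i})=\sin(\pi i a/p)/\sin(\pi i/p)$, and your bookkeeping of the constants $4^{1-g}2^{-|\bm{n}|}$ comes out exactly as required.
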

\begin{lemma}
  \label{lem:Zagier}
  For $\zeta_1, \zeta_2 \in \C$ such that $\zeta_i ^{2k} =1$ and $0 < \mathrm{arg}(\zeta_i) < \pi$ ($i=1, 2$),
  it holds that
  \[\sum_{0<a<k}S_{a-1}(\zeta_1 + \zeta_1^{-1})S_{a-1}(\zeta_2 + \zeta_2^{-1}) =
  \begin{cases}
    \frac{-2k}{(\zeta_1-\zeta_1^{-1})^2} & \text{if}\quad  \zeta_1 = \zeta_2 \\
    0 & \text{otherwise}
  \end{cases}
  \]
  where $S_{a-1}(z)$ is the Chebyshev polynomial of the second kind.

  In particular, we have
  \[\sum_{0 < a < k}
  \left(\frac{\sin (\pi j a / k)}{\sin (\pi j /k)}\right)^2 = \frac{k}{2\sin^2(\pi j / k)}\]
  where j is an integer such that $0 < j < k$.
\end{lemma}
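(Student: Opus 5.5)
We prove Lemma~\ref{lem:Zagier} by writing the Chebyshev polynomials of the second kind as quotients of sines and then using finite orthogonality of roots of unity. Write $\zeta_1 = e^{\pi\sqrt{-1}\,j_1/k}$ and $\zeta_2 = e^{\pi\sqrt{-1}\,j_2/k}$. The conditions $\zeta_i^{2k}=1$ and $0<\arg\zeta_i<\pi$ force $j_1, j_2$ to be integers with $0<j_i<k$. The formula $S_{a-1}(\zeta+\zeta^{-1}) = (\zeta^a-\zeta^{-a})/(\zeta-\zeta^{-1})$ from the Remark in SubSect.~\ref{sec:chebyshev} then gives
\[
S_{a-1}(\zeta_1+\zeta_1^{-1})\,S_{a-1}(\zeta_2+\zeta_2^{-1})
= \frac{(\zeta_1^a-\zeta_1^{-a})(\zeta_2^a-\zeta_2^{-a})}{(\zeta_1-\zeta_1^{-1})(\zeta_2-\zeta_2^{-1})}.
\]
So it suffices to evaluate $\Sigma = \sum_{0<a<k} (\zeta_1^a-\zeta_1^{-a})(\zeta_2^a-\zeta_2^{-a})$.

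We expand the numerator into four monomials:
\[
\Sigma = \sum_{0<a<k}\bigl((\zeta_1\zeta_2)^a + (\zeta_1\zeta_2)^{-a} - (\zeta_1\zeta_2^{-1})^a - (\zeta_1^{-1}\zeta_2)^a\bigr).
\]
The two symmetric pairs combine neatly once the range is extended to a full period. The summand $a=0$ of each pair is $1+1-1-1=0$. The summand $a=k$ is also $0$, because $\zeta_i^k=\pm1$ makes $\zeta_i^k-\zeta_i^{-k}=0$. Moreover, replacing $a$ by $2k-a$ maps the range $k<a<2k$ onto $0<a<k$ and leaves the product invariant, since both factors change sign. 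Hence
\[
\Sigma = \frac12\sum_{a=0}^{2k-1} (\zeta_1^a-\zeta_1^{-a})(\zeta_2^a-\zeta_2^{-a}).
\]

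Over a full period we apply $\sum_{a=0}^{2k-1}\omega^a = 2k\,\delta_{\omega,1}$, valid for any $2k$-th root of unity $\omega$. The product $\zeta_1\zeta_2 = e^{\pi\sqrt{-1}(j_1+j_2)/k}$ has $0<j_1+j_2<2k$, so it is never $1$, and the same holds for its inverse. The ratio $\zeta_1\zeta_2^{-1}$ equals $1$ exactly when $j_1=j_2$. Therefore $\Sigma = \frac12\cdot(-2k-2k) = -2k$ when $\zeta_1=\zeta_2$, and $\Sigma=0$ otherwise. Dividing by $(\zeta_1-\zeta_1^{-1})^2$ gives the claimed formula.

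For the particular case, set $\zeta_1=\zeta_2=e^{\pi\sqrt{-1}j/k}$. Then $S_{a-1}(\zeta+\zeta^{-1}) = \sin(\pi ja/k)/\sin(\pi j/k)$ and $(\zeta-\zeta^{-1})^2 = -4\sin^2(\pi j/k)$, so the right-hand side becomes $-2k/(-4\sin^2(\pi j/k)) = k/(2\sin^2(\pi j/k))$, as required.

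The only point that needs care is the bookkeeping in the reflection step: checking that the terms $a=0$ and $a=k$ vanish and that $a\mapsto 2k-a$ preserves the product. The rest is direct computation.
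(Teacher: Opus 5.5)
Your proof is correct and follows essentially the paper's route: expand the product into monomials in $\zeta_1\zeta_2^{\pm1}$ and evaluate geometric sums of $2k$-th roots of unity. The only difference is bookkeeping. You fold $0<a<k$ out to a full period via $a\mapsto 2k-a$ and use plain orthogonality, whereas the paper evaluates the half-range sum $\sum_{0<a<k}(\omega^a+\omega^{-a})\in\{-2,0\}$ according to $\omega^k=\pm1$ (Eq.~\eqref{Eq:sum_zeta}, which it reuses later for the initial values); that version tacitly needs $\zeta_1\zeta_2$ and $\zeta_1\zeta_2^{-1}$ to have the same $k$-th power so the two half-sums cancel when $\zeta_1\neq\zeta_2$, a point your argument avoids.
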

\begin{proof}
  If $\zeta \in \C$ satisfies $\zeta^{2k} = 1$ and $\zeta \not = 1$, then
  $\zeta^{2k-1} + \cdots +\zeta^{k+1} + \zeta^k + \zeta^{k-1} + \cdots + \zeta + 1 = 0$.
  We can rewrite this equality as
  \begin{align}
    \sum_{0 < a < k}\zeta^{2k-a} + \zeta^k + \sum_{0 < a < k} \zeta^a + 1 &= 0 \notag \\
    \sum_{0 < a < k}\zeta^{-a} + \zeta^k + \sum_{0 < a < k} \zeta^a + 1  &= 0 \notag \\
    \sum_{0 < a < k}\zeta^{-a} + \sum_{0 < a < k} \zeta^a
    &=
    \begin{cases}
    -2 & \text{if}\quad \zeta^k = 1 \\
    0 & \text{if}\quad \zeta^k = -1
    \end{cases}
    \label{Eq:sum_zeta}
  \end{align}
  which implies the lemma above together with
  \[S_{a-1}(\zeta + \zeta^{-1}) = \frac{\zeta^a - \zeta^{-a}}{\zeta - \zeta^{-1}}.\]
\end{proof}

\begin{proof}[Proof of Proposition~\ref{prop:Fusionrules}]
  We will see that the left hand sides turn into the right hand sides as follows.

  \noindent \eqref{item:fusionI}  
  By definition, we can write $d(g, \bm{n}+2\bm{\ell}_{a,\, b})$ as
  \begin{align*}
    &d(g, \bm{n}+2\bm{\ell}_{a,\, b})\\
    &=
    \left(\frac{1}{4}\right)^{g-1}
    \left(\frac{1}{2}\right)^{|\bm{n}|+2} \cdot 
    \sum_{\substack{0 < i < p \\ 0 < j < q}}
      \left(\frac{p}{2\sin^2 \frac{\pi i}{p}} \frac{q}{2\sin^2 \frac{\pi j}{q}}\right)^{g-1}
      \left(\frac{\sin \frac{\pi i a}{p}}{\sin \frac{\pi i}{p}}
      \frac{\sin \frac{\pi j b}{q}}{\sin \frac{\pi j}{q}}\right)^2
      \prod_{\substack{0 < a < p \\ 0 < b < q}}      
      \left(\frac{\sin \frac{\pi i a}{p}}{\sin \frac{\pi i}{p}}
      \frac{\sin \frac{\pi j b}{q}}{\sin \frac{\pi j}{q}}\right)^{n_{a, b}}.
  \end{align*}
  By Lemma~\ref{lem:Zagier},  we can also rewrite the summation of $d(g, \bm{n}+2\bm{\ell}_{a,\, b})$ as
  \begin{align*}
    &\sum_{\substack{0<a<p \\ 0<b<q}}
    d(g, \bm{n}+2\bm{\ell}_{a,\, b})\\
    &=
    \left(\frac{1}{4}\right)^{g}
    \left(\frac{1}{2}\right)^{|\bm{n}|} \\
    &\quad \cdot
    \sum_{\substack{0 < i < p \\ 0 < j < q}}
      \left(\frac{p}{2\sin^2 \frac{\pi i}{p}} \frac{q}{2\sin^2 \frac{\pi j}{q}}\right)^{g-1}
      \frac{p}{2 \sin^2 (\pi i / p)}
      \frac{q}{2 \sin^2 (\pi j / q)}
      \prod_{\substack{0 < a < p \\ 0 < b < q}}      
      \left(\frac{\sin \frac{\pi i a}{p}}{\sin \frac{\pi i}{p}}
      \frac{\sin \frac{\pi j b}{q}}{\sin \frac{\pi j}{q}}\right)^{n_{a,\, b}}\\
    &= d(g+1, \bm{n}).      
  \end{align*}

  \noindent\eqref{item:fusionII}  
  The product $d(g, \bm{n}+\bm{\ell}_{a,\, b})d(g', \bm{n'}+\bm{\ell}_{a,\, b})$ turns into
  \begin{align*}
    &\left(\frac{1}{4}\right)^{g+g'-1}
    \left(\frac{1}{2}\right)^{|\bm{n}|+|\bm{n'}|}  \\
    &\quad \cdot
    \left(
    \sum_{\substack{0 < i < p \\ 0 < j < q}}
    \left(\frac{p}{2\sin^2 \frac{\pi i}{p}} \frac{q}{2\sin^2 \frac{\pi j}{q}}\right)^{g-1}
    S_{a-1}(\zeta^i + \zeta^{-i}) S_{b-1}(\eta^j + \eta^{-j})
    \prod_{\substack{0 < a < p \\ 0 < b < q}}
    \left(\frac{\sin \frac{\pi i a}{p}}{\sin \frac{\pi i}{p}}
    \frac{\sin \frac{\pi j b}{q}}{\sin \frac{\pi j}{q}}\right)^{n_{a,\, b}}
    \right)  \\
    &\quad \cdot
    \left(
    \sum_{\substack{0 < i < p \\ 0 < j < q}}
    \left(\frac{p}{2\sin^2 \frac{\pi i}{p}} \frac{q}{2\sin^2 \frac{\pi j}{q}}\right)^{g'-1}
    S_{a-1}(\zeta^i + \zeta^{-i}) S_{b-1}(\eta^j + \eta^{-j})
    \prod_{\substack{0 < a < p \\ 0 < b < q}}
    \left(\frac{\sin \frac{\pi i a}{p}}{\sin \frac{\pi i}{p}}
    \frac{\sin \frac{\pi j b}{q}}{\sin \frac{\pi j}{q}}\right)^{n'_{a,\, b}}
    \right)
  \end{align*}
  where $\zeta = e^{\pi \sqrt{-1}/p}$ and $\eta = e^{\pi \sqrt{-1}/q}$.

  Lemma~\ref{lem:Zagier} shows that the summation of $d(g, \bm{n}+\bm{\ell}_{a,\, b})d(g', \bm{n'}+\bm{\ell}_{a,\, b})$ turns out to be
  \begin{align*}
    &\sum_{\substack{0<a<p \\ 0<b<q}} d(g, \bm{n}+\bm{\ell}_{a,\, b})d(g', \bm{n'}+\bm{\ell}_{a,\, b}) \\
    &=\left(\frac{1}{4}\right)^{g+g'-1}
    \left(\frac{1}{2}\right)^{|\bm{n}|+|\bm{n'}|}  \\
    &\quad \cdot
    \sum_{\substack{0 < i < p \\ 0 < j < q}}
      \left(\frac{p}{2\sin^2 \frac{\pi i}{p}} \frac{q}{2\sin^2 \frac{\pi j}{q}}\right)^{g+g'-2}
      \frac{p}{2 \sin^2 (\pi i / p)}
      \frac{q}{2 \sin^2 (\pi j / q)}
      \prod_{\substack{0 < a < p \\ 0 < b < q}}      
      \left(\frac{\sin \frac{\pi i a}{p}}{\sin \frac{\pi i}{p}}
      \frac{\sin \frac{\pi j b}{q}}{\sin \frac{\pi j}{q}}\right)^{n_{a,\, b}+n'_{a,\, b}}\\
    &=d(g+g', \bm{n}+\bm{n'}).    
  \end{align*}
\end{proof}

We also carry out the initial values of the Verlinde numbers as follows.
\begin{proposition}[Initial values of $d(g, \bm{n})$]
  \label{prop:initialvalues}
  The Verlinde numbers $d(0, \bm{n})$ of the $(p,\, q)$-torus knot exterior for $|\bm{n}| (=n_{1,\, 1} + \cdots + n_{p-1,\, q-1}) = 1, 2, 3$ take the following values:
  \begin{enumerate}
  \item \label{item:initial_i}
    $d(0, \bm{\ell}_{a,\, b}) =
    \begin{cases}
      2 & \text{if $(a,\, b) = (1,\, 1)$} \\
      0 & \text{otherwise}
    \end{cases}$
  \item \label{item:initial_ii}
    $d(0, \bm{\ell}_{a,\, b}+\bm{\ell}_{c,\, d}) =
    \begin{cases} 1 & \text{if $(a,\, b)=(c,\, d)$} \\
      0 & \text{otherwise}\end{cases}$
  \item \label{item:initial_iii}
    \label{item:d_0_n3} $d(0, \bm{\ell}_{a,\, b}+\bm{\ell}_{c,\, d}+\bm{\ell}_{e,\, f})
    = \begin{cases}
      1/2 & \text{if $2\mathrm{max}\{a, c, e\} < a+c+e < 2p$, $a+c+e$ odd} \\
      & \text{and $2\mathrm{max}\{b, d, f\} < b+d+f < 2q$, $b+d+f$ odd} \\
      0 & \text{otherwise}
      \end{cases}$ 
  \end{enumerate}
\end{proposition}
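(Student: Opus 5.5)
The plan is to observe that every Verlinde number $d(0,\bm{n})$ factors into a $p$-part and a $q$-part, each of which is a genus-zero $\SU$ Wess–Zumino–Witten fusion coefficient. For $g=0$ the definition gives
\[
d(0,\bm{n}) = 4\left(\frac12\right)^{|\bm{n}|} A_p(a_1,\dots,a_n)\, A_q(b_1,\dots,b_n),
\]
where, for an integer $k\ge 2$,
\[
A_k(a_1,\dots,a_n)=\frac{2}{k}\sum_{0<i<k}\sin^2\frac{\pi i}{k}\prod_{m=1}^{n}\frac{\sin(\pi i a_m/k)}{\sin(\pi i/k)} .
\]
The factorization holds because the summand is a product of a function of $i$ and a function of $j$. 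Since $4\cdot\frac12=2$, $4\cdot\frac14=1$ and $4\cdot\frac18=\frac12$, it then suffices to prove three facts about $A_k$:
\begin{itemize}
\item $A_k(a)=\delta_{a,1}$;
\item $A_k(a,c)=\delta_{a,c}$;
\item $A_k(a,c,e)$ equals $1$ if $2\max\{a,c,e\}<a+c+e<2k$ with $a+c+e$ odd, and $0$ otherwise.
\end{itemize}

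For $n=1$ and $n=2$ I will use Lemma~\ref{lem:Zagier}. Its first statement, with $\zeta_1=e^{\pi\sqrt{-1}\,i/k}$ and $\zeta_2=e^{\pi\sqrt{-1}\,i'/k}$, says the rows of the real symmetric matrix $M_{ia}=\sqrt{2/k}\,\sin(\pi i a/k)$, $0<i,a<k$, are orthonormal. Hence $M$ is orthogonal, and by symmetry
\[
\frac2k\sum_{i}\sin\frac{\pi i a}{k}\sin\frac{\pi i c}{k}=\delta_{a,c}.
\]
For $n=2$ the factors $\sin(\pi i/k)$ cancel and this is exactly $A_k(a,c)=\delta_{a,c}$. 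For $n=1$ one factor cancels, leaving $\frac2k\sum_i \sin(\pi i/k)\sin(\pi i a/k)=\delta_{a,1}$, which is the case $c=1$.

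For $n=3$ I reduce to $n=2$ using the Clebsch–Gordan identity for Chebyshev polynomials of the second kind:
\[
S_{c-1}(z)S_{e-1}(z)=\sum_{\substack{|c-e|<m<c+e\\ m\equiv c+e+1 \ (\mathrm{mod}\ 2)}}S_{m-1}(z).
\]
This follows from $S_{k-1}(\zeta+\zeta^{-1})=(\zeta^{k}-\zeta^{-k})/(\zeta-\zeta^{-1})$ by a telescoping sum. Evaluating at $z=2\cos(\pi i/k)$ gives
\[
A_k(a,c,e)=\sum_m \frac2k\sum_i \sin\frac{\pi i a}{k}\sin\frac{\pi i m}{k}.
\]
Here $m$ can range up to $c+e-1\le 2k-3$, so I fold each $m$ back into $(0,k)$ before applying orthogonality:
\begin{itemize}
\item $m=k$ contributes $0$;
\item $k<m<2k$ contributes $-\sin(\pi i(2k-m)/k)$, hence $-\delta_{a,2k-m}$.
\end{itemize}

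The hard step is the truncation bookkeeping. I need to count the $m$ in the string with $m=a$, minus those with $2k-m=a$, and show the result is $1$ exactly under the stated conditions. First, $m=a$ occurs iff $|c-e|<a<c+e$ and $a+c+e$ is odd, i.e.\ iff $2\max\{a,c,e\}<a+c+e$ with odd parity. Second, given that, the reflected term $m=2k-a$ lies in the string iff $2k-a<c+e$, i.e.\ $a+c+e>2k$. The parity of $2k-a$ matches that of $a$, and $2k-a>|c-e|$ holds automatically since $a,c,e<k$. Third, $a+c+e=2k$ is excluded by oddness. Finally, I must check that no reflected term appears without a matching direct term. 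If $2k-a$ is in the string then $a>2k-(c+e)>|c-e|$ and $a<k<c+e$, so $m=a$ is in the string too. This cancellation yields the net value $1$ exactly when $2\max<a+c+e<2k$ with $a+c+e$ odd, and $0$ otherwise. Applying this to both $A_p$ and $A_q$ gives part~\eqref{item:initial_iii}.
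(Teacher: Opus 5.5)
Your proof is correct, and it is organized differently from the paper's.

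\textbf{The paper's route.} It treats each case by expanding the products of sines into powers of $\zeta=e^{\pi\sqrt{-1}/p}$ and $\eta=e^{\pi\sqrt{-1}/q}$, then applying the root-of-unity identity \eqref{Eq:sum_zeta} directly. In (iii) it expands $S_{e-1}$ into a string of powers and writes the $(a,c)$-factor as a difference of cosines. It then counts when an exponent $a\pm c+e-(2l-1)$, $1\le l\le e$, equals $0$ or $2p$, with a separate remark that certain combinations of cases cannot occur. For $(a,b)=(1,1)$ in (i) it computes nothing and uses $d(0,\bm{0})=4$, imported from \cite{TranY}.

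\textbf{Your route.} You make the factorization $d(0,\bm{n})=2^{2-|\bm{n}|}A_pA_q$ explicit. You derive everything from one orthogonality relation for the symmetric matrix $\sqrt{2/k}\,\sin(\pi ia/k)$. You reduce the three-point case to the two-point case via the Clebsch--Gordan rule for $S_{c-1}S_{e-1}$, handling truncation by the reflection $m\mapsto 2k-m$. The paper's impossible case combinations become your observation that the reflected term $2k-a$ can occur only together with the direct term $a$.

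\textbf{What your route buys.} First, the argument is self-contained: the identity $\frac{2}{k}\sum_{0<i<k}\sin^2(\pi i/k)=1$ is the case $a=c=1$ of orthogonality. Hence $d(0,\bm{0})=4$, i.e.\ $\sum_{[\rho]}(2\Tor{\rho})^{-1}=1$ for torus knots, falls out rather than being cited. Second, it identifies $A_k(a,c,e)$ as the level-$(k-2)$ $\mathrm{SU}(2)$ fusion coefficient, which explains the shape of the conditions in (iii). Third, iterating the same Clebsch--Gordan-plus-folding step shows $A_k(a_1,\dots,a_n)\in\Z$ for every $n$. So $d(0,\bm{n})\in 2^{2-|\bm{n}|}\Z$ follows immediately, which is Lemma~\ref{lemma:d_0} without the induction through the fusion rules.

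\textbf{What the paper's route buys.} It uses a single elementary identity uniformly across all three cases and needs no product formula for Chebyshev polynomials.
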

\begin{proof}[Proof of Proposition~\ref{prop:initialvalues}]
  \noindent \eqref{item:initial_i}
  If $(a,\, b) = (1,\, 1)$, then we have $d(0, \bm{\ell}_{1, 1})=(1/2)d(0, \bm{0})=2$.
  In the case of $(a,\, b) \not = (1,\, 1)$, the Verlinde number $d(0, \bm{\ell}_{a,\, b})$ is expressed as
  \begin{align*}
    d(0, \bm{\ell}_{a,\, b})
    &= 2 \sum_{\substack{0 < i < p \\ 0 < j < q}} \frac{2\sin\frac{\pi i}{p} 2\sin\frac{\pi j}{q}}{pq}
    \sin\frac{\pi i a}{p} \sin \frac{\pi j b}{q} \\
    &= \frac{1}{2pq}
    \sum_{0 < i < p} (\zeta^i - \zeta^{-i})(\zeta^{ia} - \zeta^{-ia})
    \sum_{0 < j < q} (\eta^j - \eta^{-j})(\eta^{jb} - \eta^{-jb}) \\
    &= \frac{1}{2pq}
    \sum_{0 < i < p} (\zeta^{i(a+1)} + \zeta^{-i(a+1)} - (\zeta^{i(a-1)} + \zeta^{-i(a-1)}))  \\
    &\hphantom{\frac{-2}{pq}}\quad
    \cdot \sum_{0 < j < q} (\eta^{j(b+1)} + \eta^{-j(b+1)} - (\eta^{j(b-1)} + \eta^{-j(b-1)})) 
  \end{align*}
  where $\zeta=e^{\pi \sqrt{-1}/p}$ and $\eta=e^{\pi \sqrt{-1}/q}$.
  If $a \not = 1$ (resp. $b \not = 1$), then the sum of $\zeta$ (resp. $\eta$) vanishes by Eq.~\eqref{Eq:sum_zeta}
  since the exponents of $\zeta$ (resp. $\eta$) have the same parity.

  \noindent \eqref{item:initial_ii}
  We can express $d(0, \bm{\ell}_{a,\, b} + \bm{\ell}_{c,\, d})$ as
  \begin{align*}
    d(0, \bm{\ell}_{a,\, b} + \bm{\ell}_{c,\, d})
    &= \frac{4}{pq}
    \sum_{\substack{0 < i < p \\ 0 < j < q}}
    \sin \frac{\pi i a}{p} \sin \frac{\pi i c}{p} \sin\frac{\pi j b}{q} \sin \frac{\pi j d}{q} \\
    &= \frac{1}{4pq}
    \sum_{0 < i < p} (\zeta^{ia} - \zeta^{-ia})(\zeta^{ic} - \zeta^{-ic})
    \sum_{0 < j < q} (\eta^{jb} - \eta^{-jb})(\eta^{jd} - \eta^{-jd}) \\
    &= \frac{1}{4pq}
    \sum_{0 < i < p} (\zeta^{i(a+c)} + \zeta^{-i(a+c)} - (\zeta^{i(a-c)} + \zeta^{-i(a-c)})) \\
    &\phantom{\frac{4}{pq}} \quad \cdot
    \sum_{0 < j < q} (\eta^{j(b+d)} + \eta^{-j(b+d)} - (\eta^{j(b-d)} + \eta^{-j(b-d)}))
  \end{align*}
  where $\zeta=e^{\pi \sqrt{-1}/p}$ and $\eta=e^{\pi \sqrt{-1}/q}$.
  Then Eq.~\eqref{Eq:sum_zeta} shows that this summation equals $1$ if $(a, b)=(c, d)$ or $0$ otherwise.

  \noindent\eqref{item:initial_iii}
  Set $\zeta=e^{\pi \sqrt{-1}/p}$ and $\eta=e^{\pi \sqrt{-1}/q}$.
  We can also express $d(0, \bm{\ell}_{a,\, b} + \bm{\ell}_{c,\, d} + \bm{\ell}_{e,\, f})$ as
  \begin{align*}
    &d(0, \bm{\ell}_{a,\, b} + \bm{\ell}_{c,\, d} + \bm{\ell}_{e,\, f}) \\
    &=\frac{1}{2}
    \frac{-1}{2p}
    \sum_{0 < i < p} \frac{(\zeta^{ia} - \zeta^{-ia})(\zeta^{ic} - \zeta^{-ic})(\zeta^{ie} - \zeta^{-ie})}{(\zeta^{i} - \zeta^{-i})} \\
    &\hphantom{\frac{1}{2}}\quad \cdot \frac{-1}{2q}
    \sum_{0 < j < q} \frac{(\eta^{jb} - \eta^{-jb})(\eta^{jd} - \eta^{-jd})(\eta^{jf} - \eta^{-jf})}{(\eta^{j} - \eta^{-j})}\\
    &=\frac{1}{2}
    \frac{-1}{2p}
    \sum_{0 < i < p} (\zeta^{i(a+c)} + \zeta^{-i(a+c)} - \zeta^{i(a-c)} - \zeta^{-i(a-c)})
    (\zeta^{i(e-1)} + \zeta^{i(e-3)} + \cdots + \zeta^{-i(e-3)} + \zeta^{-i(e-1)}) \\
    &\cdot \frac{-1}{2q}
    \sum_{0 < j < q} (\eta^{j(b+d)} + \eta^{-j(b+d)} - \eta^{j(b-d)} - \eta^{-j(b-d)})
    (\eta^{j(f-1)} + \eta^{j(f-3)} + \dots + \eta^{-j(f-3)} + \eta^{-j(f-1)}) \\
    &=\frac{1}{2}
    \frac{1}{2p}\left(
    \sum_{k=1}^e \sum_{0 < i < p} (\zeta^{i(a-c)} + \zeta^{-i(a-c)}) \zeta^{i(e - (2k-1))}
    - \sum_{k=1}^e \sum_{0 < i < p} (\zeta^{i(a+c)} + \zeta^{-i(a+c)}) \zeta^{i(e - (2k-1))} \right) \\
    &\quad \cdot
    \frac{1}{2q}\left(
    \sum_{k=1}^f \sum_{0 < j < q} (\eta^{j(b-d)} + \eta^{-j(b-d)}) \eta^{j(f - (2k-1))}
    - \sum_{k=1}^f \sum_{0 < j < q} (\eta^{j(b+d)} + \eta^{-j(b+d)}) \eta^{j(f - (2k-1))} \right).
  \end{align*}
  We focus on the sums of $\zeta$.
  Each sum of $\zeta$ vanishes if $a-c+e \equiv a+c+e \equiv 0 \pmod{2}$ by Eq.~\eqref{Eq:sum_zeta} respectively. In the case of $a-c+e \equiv a+c+e \equiv 1 \pmod{2}$,
  we have
  \begin{align*}
  &\sum_{k=1}^e \sum_{0 < i < p} (\zeta^{i(a-c+e - (2k-1))} + \zeta^{-i(a-c+e - (2k-1))})\\
  &=\begin{cases}
    -2e+2p & \text{if $\exists k$ such that $a-c+e-(2k-1)=0$ } \\
    &  \Leftrightarrow |a-c|<e ; \\
    -2e & \text{otherwise}
  \end{cases}
  \end{align*}
  and
  \begin{align*}
  &\sum_{k=1}^e \sum_{0 < i < p} (\zeta^{i(a+c+e - (2k-1))} + \zeta^{-i(a+c+e - (2k-1))})\\
  &=\begin{cases}
    -2e+2p & \text{if $\exists k$ such that $a+c+e-(2k-1)=0$ or $2p$} \\
    & \text{$\Leftrightarrow a+c<e$ or $a+c+e>2p$}; \\
  -2e & \text{otherwise}
  \end{cases}
  \end{align*}
  Note that the case of $|a-c| \ge e$ and $a+c<e$ or $a+c+e>2p$ does not occur.
  Summarizing, we can see
  \begin{align*}
    &\frac{1}{2p}\left(
    \sum_{k=1}^e \sum_{0 < i < p} (\zeta^{i(a-c)} + \zeta^{-i(a-c)}) \zeta^{i(e - (2k-1))}
    - \sum_{k=1}^e \sum_{0 < i < p} (\zeta^{i(a+c)} + \zeta^{-i(a+c)}) \zeta^{i(e - (2k-1))} \right)\\
    &=
    \begin{cases}
      1 & \text{if $2\mathrm{max}\{a, c, e\} < a+c+e < 2p$, $a+c+e$ odd}; \\
      0 & \text{otherwise}.
    \end{cases}
  \end{align*}
  Similar arguments apply to the sum of $\eta$ which proves the initial value~\eqref{item:initial_iii}.
\end{proof}

We can compute the Verlinde numbers $d(g, \bm{0})$ for any $g>0$ by the initial values above and the Fusion rules.
\begin{proposition}
  \label{prop:Verlinde}
  Let $d(g, \bm{n})$ denote the Verlinde number of the $(p,\, q)$-torus knot exterior.
  The Verlinde number $d(g, \bm{0})$ is in $(1/2)^{g-2}\Z$.
\end{proposition}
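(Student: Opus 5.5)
Our approach is to compute $d(g,\bm{0})$ by repeatedly applying the Fusion rules of Proposition~\ref{prop:Fusionrules} until only genus-zero numbers with weights $1,2,3$ remain. Those are known from Proposition~\ref{prop:initialvalues} to lie in $\{0,1/2,1,2\}$. The point is to track the powers of $1/2$ introduced along the way.

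\textbf{Step 1 (reduce genus by fusion rule (1)).} Applying Proposition~\ref{prop:Fusionrules}\eqref{item:fusionI} once gives
\[d(g,\bm{0})=\sum_{0<a<p,\,0<b<q} d(g-1,2\bm{\ell}_{a,\,b}).\]
Iterating, $d(g,\bm{0})$ becomes a finite sum of genus-zero numbers $d(0,\bm{n})$ with $|\bm{n}|=2g$, for $g\ge 1$.

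\textbf{Step 2 (reduce the weight by fusion rule (2)).} Take $\bm{n}$ with $|\bm{n}|=m\ge 4$ and split it as $\bm{n}=\bm{n}_1+\bm{n}_2$ with $|\bm{n}_1|=2$. Proposition~\ref{prop:Fusionrules}\eqref{item:fusionII} with $g=g'=0$ gives
\[d(0,\bm{n})=\sum_{a,b} d(0,\bm{n}_1+\bm{\ell}_{a,\,b})\,d(0,\bm{n}_2+\bm{\ell}_{a,\,b}).\]
The first factor has weight $3$, so it lies in $\{0,1/2\}\subset \tfrac12\Z$ by Proposition~\ref{prop:initialvalues}\eqref{item:initial_iii}. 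The second factor has weight $m-1$.

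We then prove by induction on $m\ge 2$ that
\[d(0,\bm{n})\in(1/2)^{m-2}\Z\quad\text{for }|\bm{n}|=m.\]
The base cases are $m=2$, where the value lies in $\{0,1\}$ by \eqref{item:initial_ii}, and $m=3$, where it lies in $\tfrac12\Z$ by \eqref{item:initial_iii}. For the inductive step, the recursion above expresses a weight-$m$ number as a sum of products of an element of $\tfrac12\Z$ with an element of $(1/2)^{m-3}\Z$. This gives membership in $(1/2)^{m-2}\Z$, as required.

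\textbf{Step 3 (conclude).} With $m=2g$, Step 2 yields $d(g,\bm{0})\in(1/2)^{2g-2}\Z$. This is weaker than the claim, so the naive count must be sharpened, and this is the main obstacle.

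To sharpen it, we will not peel off weight two at each step. We instead interleave the two fusion rules so that each unit of genus costs exactly one factor of $1/2$. Write $d(g,\bm{0})=\sum_{a,b}d(g-1,2\bm{\ell}_{a,\,b})$. Then apply rule (2) to split off a genus-zero factor $d(0,\bm{\ell}_{a,\,b}+\bm{\ell}_{a,\,b}+\bm{\ell}_{c,\,d})\in\tfrac12\Z$, leaving $d(g-1,\bm{\ell}_{c,\,d})$. Iterating on $d(h,\bm{\ell}_{c,\,d})$ in the same way, via $d(h,\bm{\ell}_{c,\,d})=\sum d(h-1,\bm{\ell}_{c,\,d}+2\bm{\ell}_{a,\,b})$ followed by splitting off a weight-three factor, each genus step contributes exactly one factor from $\tfrac12\Z$. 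The process bottoms out at $d(0,\bm{\ell}_{c,\,d})\in\{0,2\}$ from \eqref{item:initial_i}, which supplies an extra factor of $2$.

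This gives
\[d(g,\bm{0})\in 2\cdot(1/2)^{g-1}\Z\subset(1/2)^{g-2}\Z.\]
The case $g=0$ is $d(0,\bm{0})=4$. The care needed lies in checking that the splittings in rule (2) are arranged so that every factor other than the single last one has weight exactly three.
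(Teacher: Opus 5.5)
The gap is in Step~3. Only your first splitting behaves as claimed: $d(g-1,2\bm{\ell}_{a,b})=\sum_{c,d}d(0,2\bm{\ell}_{a,b}+\bm{\ell}_{c,d})\,d(g-1,\bm{\ell}_{c,d})$.

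After that, rule~(2) applied to the weight-three number $d(h-1,\bm{\ell}_{c,d}+2\bm{\ell}_{a,b})$ has the form $\sum_{e,f}d(0,\bm{n}_1+\bm{\ell}_{e,f})\,d(h-1,\bm{n}_2+\bm{\ell}_{e,f})$ with $|\bm{n}_1|+|\bm{n}_2|=3$. A weight-three genus-zero factor forces $|\bm{n}_2|=1$, so the remaining factor has weight two, not one. Leaving weight one instead forces $|\bm{n}_1|=3$, i.e.\ a weight-four factor, which is only in $\tfrac14\Z$ (e.g.\ $d(0,4\bm{\ell}_{1,1})=\tfrac14$).

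So the weight grows by one per genus step, and you never reach $d(0,\bm{\ell}_{c,d})\in\{0,2\}$. The process ends at $d(0,\bm{\ell}_{a_1,b_1}+\cdots+\bm{\ell}_{a_g,b_g})$, and the count falls back to $(1/2)^{2g-2}\Z$. Even your idealized count is off by one: $g$ factors of $\tfrac12$ times $2$ gives $(1/2)^{g-1}$, not $(1/2)^{g-2}$.

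More fundamentally, no arrangement that bounds each weight-three value separately by $\tfrac12$ can work, because your argument never uses that $q$ is odd. The fusion rules and Proposition~\ref{prop:initialvalues} hold, with the same proofs, for $(p,q)=(2,4)$. Yet there $d(2,\bm{0})=\tfrac14(4+2+4)=\tfrac52\notin\Z$.

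The missing idea is to sum the weight-three values before estimating them. By rule~(1), $\sum_{a,b}d(0,\bm{\ell}_{c,d}+2\bm{\ell}_{a,b})=d(1,\bm{\ell}_{c,d})$. By Proposition~\ref{prop:initialvalues}\eqref{item:initial_iii}, the nonzero summands all equal $\tfrac12$, and they occur exactly when $c,d$ are odd, $c/2<a<p-c/2$ and $d/2<b<q-d/2$. Hence $d(1,\bm{\ell}_{c,d})=(p-c)(q-d)/2$. This is an integer because $q-d$ is even, and this is where the oddness of $q$ enters (Lemma~\ref{lemma:d_1}).

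Then iterate rule~(2) in the form $d(h,\bm{n})=\sum_{e,f}d(h-1,\bm{n}+\bm{\ell}_{e,f})\,d(1,\bm{\ell}_{e,f})$. This writes $d(g,\bm{0})$ as a sum of terms $d(0,\bm{\ell}_{a_1,b_1}+\cdots+\bm{\ell}_{a_g,b_g})\prod_{k}d(1,\bm{\ell}_{a_k,b_k})$. Finish with your Step~2 at $m=g$, adding the case $m=1$, where $d(0,\bm{\ell}_{a,b})\in 2\Z$.

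This is the paper's proof, and your weight induction in Step~2 is exactly its Lemma~\ref{lemma:d_0}.
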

Then we obtain the integrality of the sum~\eqref{eq:sum_torsions} of the adjoint Reidemeister torsions since the sum~\eqref{eq:sum_torsions} equals $2^{g-2} d(g, \bm{0})$.
\begin{theorem}
  \label{thm:integerness}
  The sum~\eqref{eq:sum_torsions} of the adjoint Reidemeister torsions for any $(p,\, q)$-torus knot exterior and $g \geq 0$ is an integer, that is, for every generic $c \in \C$
  \[\sum_{[\rho] \in \mathrm{tr}_\mu^{-1}(c)} (2\Tor{\rho})^{g-1} \in \Z\]
  for $\forall g \geq 0$.
\end{theorem}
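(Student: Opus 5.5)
The plan is to reduce Theorem~\ref{thm:integerness} to Proposition~\ref{prop:Verlinde} and prove that proposition by induction on $g$, using the Fusion rules (Proposition~\ref{prop:Fusionrules}) and the initial values (Proposition~\ref{prop:initialvalues}).

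Since the sum~\eqref{eq:sum_torsions} equals $2^{g-2}d(g,\bm{0})$, it is enough to show $d(g,\bm{0})\in(1/2)^{g-2}\Z$. The case $g=0$ holds because $d(0,\bm{0})=4$. The case $g=1$ holds because $d(1,\bm{0})=(p-1)(q-1)$, which gives the integer $(p-1)(q-1)/2$; here $q$ is odd, so $q-1$ is even.

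For the general statement I will prove something stronger: for every $\bm{n}\in S_{p,\,q}$,
\[
  d(g,\bm{n})\in (1/2)^{\,g-2+|\bm{n}|}\Z .
\]
The base case $g=0$ comes from iterating the Fusion rules.

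\emph{Base case $g=0$.} Rule~\eqref{item:fusionII} with $g=g'=0$ reads
\[
  d(0,\bm{n}+\bm{n}')=\sum_{a,b}d(0,\bm{n}+\bm{\ell}_{a,b})\,d(0,\bm{n}'+\bm{\ell}_{a,b}).
\]
Take $\bm{n}$ of weight $2$ and $\bm{n}'$ of weight $|\bm{m}|-2$. Then $d(0,\bm{m})$ is a sum of products of a weight-$3$ value and a value of weight $|\bm{m}|-1$. By Proposition~\ref{prop:initialvalues}, every weight-$3$ value lies in $\tfrac12\Z$, so the power of $1/2$ grows by at most one each time the weight goes up by one. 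Starting from weights $1$, $2$, $3$, which give denominators $1$, $1$, $2$, induction on weight yields $d(0,\bm{m})\in(1/2)^{|\bm{m}|-2}\Z$ for $|\bm{m}|\ge 1$. Weight $0$ is covered by $d(0,\bm{0})=4$.

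\emph{Induction on $g$.} Rule~\eqref{item:fusionI} gives
\[
  d(g+1,\bm{n})=\sum_{a,b}d(g,\bm{n}+2\bm{\ell}_{a,b}).
\]
By the induction hypothesis each term lies in $(1/2)^{g-2+|\bm{n}|+2}\Z$. That is one factor of $1/2$ more than the target exponent $g-1+|\bm{n}|$.

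\emph{The main obstacle.} Since the naive bound loses exactly one factor of $2$ per genus step, I must recover a factor $2$ from the sum over $(a,b)$. There are two routes I would try.

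\begin{itemize}
\item \textbf{Route 1: pair up terms.} Pair $(a,b)$ with $(p-a,\,q-b)$ and show the two summands are equal. This amounts to showing
\[
  \sin\!\left(\frac{\pi i(p-a)}{p}\right)=(-1)^{i+1}\sin\!\left(\frac{\pi i a}{p}\right),
\]
which holds because the sign squares away in $2\bm{\ell}_{a,b}$. Since $q$ is odd, $(a,b)\neq(p-a,q-b)$ always, so the terms really do pair off and contribute an even multiple.
\item \textbf{Route 2: use rule~\eqref{item:fusionII} instead.} Write
\[
  d(g+1,\bm{0})=\sum_{a,b}d(g,\bm{\ell}_{a,b})\,d(1,\bm{\ell}_{a,b}),
\]
and control $d(1,\bm{\ell}_{a,b})$ directly from Lemma~\ref{lem:Zagier}.
\end{itemize}

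I would try the pairing argument first, since the symmetry $(a,b)\mapsto(p-a,q-b)$ is exactly what produced the factor $\tfrac12$ in~\eqref{eq:sum_torsions}. Carrying the pairing through every genus step should supply the missing power of $2$. Finally, specializing to $\bm{n}=\bm{0}$ gives Proposition~\ref{prop:Verlinde}, and hence Theorem~\ref{thm:integerness}.
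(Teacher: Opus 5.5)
Your Route~1 is a complete and correct proof, and it is not the route the paper takes. The identity $d(g,\bm{n}+2\bm{\ell}_{a,b})=d(g,\bm{n}+2\bm{\ell}_{p-a,q-b})$ holds termwise in $(i,j)$, because the sign $(-1)^{i+j}$ enters squared. The involution is fixed-point-free because $q$ is odd. Rule~\eqref{item:fusionI} therefore gives $d(g+1,\bm{n})\in 2\cdot(1/2)^{g+|\bm{n}|}\Z$, which closes the induction from your genus-zero base case; that base case is exactly the paper's Lemma~\ref{lemma:d_0}, with the same proof.

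The paper instead carries out your Route~2. It iterates rule~\eqref{item:fusionII} to write $d(g,\bm{0})$ as an iterated sum of $d(0,\bm{\ell}_{a_1,b_1}+\cdots+\bm{\ell}_{a_g,b_g})\prod_k d(1,\bm{\ell}_{a_k,b_k})$. Lemma~\ref{lemma:d_1} then evaluates $d(1,\bm{\ell}_{a,b})=\sum_{c,d}d(0,\bm{\ell}_{a,b}+2\bm{\ell}_{c,d})$ exactly: it equals $(p-a)(q-b)/2$ for $a,b$ odd and $0$ otherwise. This is obtained by counting the $(c,d)$ that satisfy the precise condition of Proposition~\ref{prop:initialvalues}\eqref{item:initial_iii}, and integrality comes from $q-b$ being even.

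So each proof uses the oddness of $q$ exactly once: for you as fixed-point-freeness of the pairing, for the paper as evenness of $q-b$. Your argument is more economical. It needs only the crude bound that weight-three genus-zero values lie in $\frac12\Z$, not the exact case analysis. Your pairing also explains why $d(1,\bm{\ell}_{a,b})$ is an integer without computing it. The paper's route, in exchange, yields the explicit genus-one values and hence an explicit expansion of $d(g,\bm{0})$ in terms of genus-zero data. Your stronger bound $d(g,\bm{n})\in(1/2)^{g-2+|\bm{n}|}\Z$ also follows from the paper's expansion if one starts it at $\bm{n}$ instead of $\bm{0}$.

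One minor inaccuracy in your motivation: the factor $\frac12$ in \eqref{eq:sum_torsions} comes from $b\mapsto q-b$, which flips the parity of $a-b$. Your involution $(a,b)\mapsto(p-a,q-b)$ preserves that parity when $p$ is odd. This does not affect your proof, and $b\mapsto q-b$ would work equally well as the pairing.
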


The remainder of this section will be devoted to the proof of Proposition~\ref{prop:Verlinde}.
\begin{proof}[Proof of Proposition~\ref{prop:Verlinde}]
  By the Fusion rules, we can rewrite $d(g, \bm{n})$ as
  \begin{align*}
    d(g, \bm{0})
    &= \sum_{\substack{0< a_1<p \\ 0<b_1<q}}
    d(g-1, \bm{\ell}_{a_1,\, b_1})d(1, \bm{\ell}_{a_1,\, b_1}) \\
    &= \sum_{\substack{0< a_1<p \\ 0<b_1<q}}
    \left(\sum_{\substack{0< a_2<p \\ 0<b_2<q}}
    d(g-2, \bm{\ell}_{a_1,\, b_1}+\bm{\ell}_{a_2,\, b_2})
    d(1, \bm{\ell}_{a_2,\, b_2})\right) d(1, \bm{\ell}_{a_1,\, b_1}) \\
    &= \sum_{\substack{0< a_1<p \\ 0<b_1<q}} \cdots
    \sum_{\substack{0< a_g<p \\ 0<b_g<q}}
    d(0, \bm{\ell}_{a_1,\, b_1} + \cdots + \bm{\ell}_{a_g,\, b_g})
    d(1, \bm{\ell}_{a_g,\, b_g}) \cdots d(1,\bm{\ell}_{a_1,\, b_1}).
  \end{align*}
  Our claim follows from the following lemmas~\ref{lemma:d_1} and~\ref{lemma:d_0}
\end{proof}

\begin{lemma}
  \label{lemma:d_1}
  The Verlinde number $d(1, \bm{\ell}_{a,\, b})$ of the $(p,\, q)$-torus knot exterior
  satisfies
  \begin{equation}
    d(1, \bm{\ell}_{a,\, b})
    = \begin{cases}
      (p-a)(q-b)/2 & \text{if $a$ and $b$ are odd} \\
      0 & \text{otherwise}
    \end{cases}
  \end{equation}
  In particular, $d(1, \bm{\ell}_{a,\, b})$ is an integer.
\end{lemma}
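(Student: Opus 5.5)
The plan is to compute $d(1, \bm{\ell}_{a,\, b})$ directly from the definition and observe that it factors into a product of two one-variable sums, each of which can be evaluated with Eq.~\eqref{Eq:sum_zeta} from the proof of Lemma~\ref{lem:Zagier}. For $g=1$ and $\bm{n}=\bm{\ell}_{a,\, b}$ the genus factor disappears and $(1/2)^{|\bm{n}|}=1/2$, so
\[
d(1, \bm{\ell}_{a,\, b}) = \frac{1}{2}\, A_p(a)\, A_q(b),
\qquad
A_k(a) := \sum_{0<i<k} \frac{\sin(\pi i a/k)}{\sin(\pi i/k)} = \sum_{0<i<k} S_{a-1}(\zeta_k^i+\zeta_k^{-i}),
\]
where $\zeta_k=e^{\pi\sqrt{-1}/k}$. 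It therefore suffices to prove that $A_k(a)=k-a$ when $a$ is odd and $A_k(a)=0$ when $a$ is even, for $0<a<k$.

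Step 1 is to expand the Chebyshev polynomial of the second kind as a symmetric Laurent sum,
\[
S_{a-1}(\zeta+\zeta^{-1}) = \sum_{m=0}^{a-1}\zeta^{a-1-2m}.
\]
Its exponents $e\in\{a-1,a-3,\dots,-(a-1)\}$ all have the parity of $a-1$ and satisfy $|e|\le a-1<k$. Grouping them into pairs $\pm e$, we get
\[
A_k(a)=\sum_{e}\,\sum_{0<i<k}\bigl(\zeta_k^{ie}+\zeta_k^{-ie}\bigr),
\]
where $e$ now runs over the positive exponents. When $a$ is odd there is one additional unpaired term $e=0$.

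Step 2 evaluates the inner sums. For $0<e<2k$, the number $\zeta_k^{e}$ is a $2k$-th root of unity different from $1$, and $(\zeta_k^{e})^k=(-1)^e$. Hence Eq.~\eqref{Eq:sum_zeta} gives the value $-2$ for even $e$ and $0$ for odd $e$. The unpaired term $e=0$ contributes $k-1$.

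If $a$ is even, every exponent is odd and nonzero, so $A_k(a)=0$. If $a$ is odd, there are $(a-1)/2$ pairs with even nonzero $e$, and therefore
\[
A_k(a)=(k-1)-2\cdot\tfrac{a-1}{2}=k-a.
\]

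Multiplying the two factors gives $d(1,\bm{\ell}_{a,\, b})=(p-a)(q-b)/2$ when $a$ and $b$ are both odd, and $0$ otherwise. For integrality, recall the standing convention that $q$ is odd. Then $b$ odd forces $q-b$ to be even, so $(p-a)(q-b)/2\in\Z$.

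The only delicate point is Step 2. One has to confirm that all exponents stay in the range $|e|<2k$ where Eq.~\eqref{Eq:sum_zeta} applies, which holds since $|e|<k$, and that the $e=0$ term is counted separately. Everything else is bookkeeping.
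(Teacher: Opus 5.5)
Your argument is correct, but it takes a different route from the paper. The paper does not evaluate the definition at $g=1$ directly. It applies Proposition~\ref{prop:Fusionrules}~\eqref{item:fusionI} with $g=0$ to write $d(1,\bm{\ell}_{a,\,b})=\sum_{c,\,d} d(0,\bm{\ell}_{a,\,b}+2\bm{\ell}_{c,\,d})$. From the three-point value in Proposition~\ref{prop:initialvalues}~\eqref{item:initial_iii} it reads off that each summand equals $1/2$ exactly when $a,b$ are odd, $a/2<c<p-a/2$ and $b/2<d<q-b/2$, and $0$ otherwise. Counting the $p-a$ admissible $c$ and the $q-b$ admissible $d$ then gives the formula.

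You instead observe that at $g=1$ the genus factor is trivial, so the definition splits as $\frac12 A_p(a)A_q(b)$. You then evaluate each one-variable sum by expanding $S_{a-1}$ into monomials and applying Eq.~\eqref{Eq:sum_zeta} to each pair $\pm e$. Your checks are exactly what is needed: $0<e\le a-1<k$, so $\zeta_k^{e}\neq 1$ and $(\zeta_k^{e})^k=(-1)^e$, and the $e=0$ term contributes $k-1$.

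Your route is shorter and self-contained: it bypasses both the fusion rules and the case analysis behind the three-point values. You also make explicit why the result is an integer ($q$ and $b$ odd force $q-b$ even), which the paper leaves unstated. The paper's route, in exchange, keeps the lemma within its gluing framework. It exhibits $d(1,\bm{\ell}_{a,\,b})$ as $\tfrac12$ times the number of admissible labels $(c,d)$ on the handle, the same mechanism that drives the inductive proofs of Proposition~\ref{prop:Verlinde} and Lemma~\ref{lemma:d_0}.
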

\begin{proof}
  We can rewrite $d(1, \bm{\ell}_{a,\, b})$ as 
  \[
    d(1, \bm{\ell}_{a,\, b})
    = \sum_{\substack{0<c<p \\ 0<d<q}} d(0, \bm{\ell}_{a,\, b}+2\bm{\ell}_{c,\, d}).
  \]
  It follows from Proposition~\ref{prop:initialvalues}~\eqref{item:d_0_n3} that
  $d(0, \bm{\ell}_{a,\, b}+2\bm{\ell}_{c,\, d})$ satisfies
  \[
    d(0, \bm{\ell}_{a,\, b}+2\bm{\ell}_{c,\, d})
    = \begin{cases}
      1/2 & \text{if $a$ odd, $a/2 < c < p - a/2$ and $b$ odd, $b/2 < d < q - b/2$};\\
      0 & \text{otherwise}
      \end{cases}
   \]
   which proves our lemma.
\end{proof}

\begin{lemma}
  \label{lemma:d_0}
  $d(0, \bm{\ell}_{a_1,\, b_1} + \cdots + \bm{\ell}_{a_g,\, b_g}) \in (1/2)^{g-2}\Z$
\end{lemma}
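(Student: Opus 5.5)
We argue by induction on $g$, the number of marked points, using the Fusion rule (ii) of Proposition~\ref{prop:Fusionrules} with $g = g' = 0$ to split off points, and the initial values of Proposition~\ref{prop:initialvalues} as the base cases. The cases $g = 1, 2, 3$ are covered directly by Proposition~\ref{prop:initialvalues}. Indeed, $d(0, \bm{\ell}_{a,\, b}) \in \{0, 2\} \subset 2\Z = (1/2)^{-1}\Z$. Next, $d(0, \bm{\ell}_{a,\, b} + \bm{\ell}_{c,\, d}) \in \{0, 1\} \subset \Z$. Finally, the three-point values lie in $\{0, 1/2\} \subset (1/2)\Z$. (For $g = 0$ the statement reads $d(0, \bm{0}) = 4 \in 4\Z$, which also holds.)

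For the inductive step, let $g \ge 4$ and write $\bm{n} = \bm{\ell}_{a_1,\, b_1} + \cdots + \bm{\ell}_{a_{g-2},\, b_{g-2}}$ and $\bm{n}' = \bm{\ell}_{a_{g-1},\, b_{g-1}} + \bm{\ell}_{a_g,\, b_g}$. Fusion rule (ii) with $g = g' = 0$ gives
\[
d(0, \bm{n} + \bm{n}') = \sum_{\substack{0 < c < p \\ 0 < d < q}} d(0, \bm{n} + \bm{\ell}_{c,\, d})\, d(0, \bm{n}' + \bm{\ell}_{c,\, d}).
\]
The first factor has $g - 1$ marked points, so by the induction hypothesis it lies in $(1/2)^{g-3}\Z$. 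The second factor is a three-point value, so it lies in $(1/2)\Z$ by Proposition~\ref{prop:initialvalues}~\eqref{item:d_0_n3}. Each product therefore lies in $(1/2)^{g-2}\Z$, and so does the finite sum, which closes the induction.

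The only point needing care is that the fusion rule is applied with $g = g' = 0$. This is legitimate: the proof of Proposition~\ref{prop:Fusionrules}~\eqref{item:fusionII} uses only the orthogonality of Lemma~\ref{lem:Zagier} and is valid for all nonnegative genera, including $g = g' = 0$ (the exponents $g-1$ and $g'-1$ are simply $-1$). Also, Proposition~\ref{prop:initialvalues} is stated for arbitrary labels, with repetitions allowed, so it covers every three-point term that can appear.

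The step I expect to be the main obstacle is the bookkeeping of the power of $1/2$. Splitting off one point at a time costs exactly one factor of $1/2$ per step, coming from the three-point value $1/2$. One must check that the base case starts at the right exponent, $(1/2)^{1}$ for three points. Both $d(0, \cdot)$ with one point (giving $2$) and with two points (giving $1$) are consistent with the formula $(1/2)^{g-2}$, so the induction can equally be started at $g = 3$ from either.
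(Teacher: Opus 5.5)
Your proof is correct and is essentially the paper's own argument. Both take the one-, two- and three-point cases from Proposition~\ref{prop:initialvalues}, and for four or more points apply Fusion rule~\eqref{item:fusionII} with both genera equal to $0$ to split off the last two labels, so each summand is a term covered by the induction hypothesis times a three-point value in $(1/2)\Z$; your remark that the fusion rule is valid at $g=g'=0$ is accurate and makes explicit what the paper uses implicitly.
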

\begin{proof}
  The cases of $g=1, 2$ and $3$ follow from Proposition~\ref{prop:initialvalues}.
  For $g>3$, we can rewrite $d(0, \bm{\ell}_{a_1,\, b_1} + \cdots + \bm{\ell}_{a_g,\, b_g})$ as
  \[
    d(0, \bm{\ell}_{a_1,\, b_1} + \cdots + \bm{\ell}_{a_g,\, b_g})
    = \sum_{\substack{0<a<p \\ 0<b<q}}
    d(0, \bm{\ell}_{a_1,\, b_1} + \cdots + \bm{\ell}_{a_{g-2},\, b_{g-2}}
    + \bm{\ell}_{a,\, b})
    d(0, \bm{\ell}_{a_{g-1},\, b_{g-1}} + \bm{\ell}_{a_g,\, b_g} + \bm{\ell}_{a,\, b}).
  \]
  Our claim follows from the induction on $g$.
\end{proof}

\section{Examples}
Classical Verlinde numbers are closely related to the cases of $(2, q)$-torus knots.
The Chebyshev curves of type $(2, q)$ are illustrated as in Figure~\ref{fig:Chebyshev_2q}.
\begin{figure}[ht]
  \includegraphics[width=12cm]{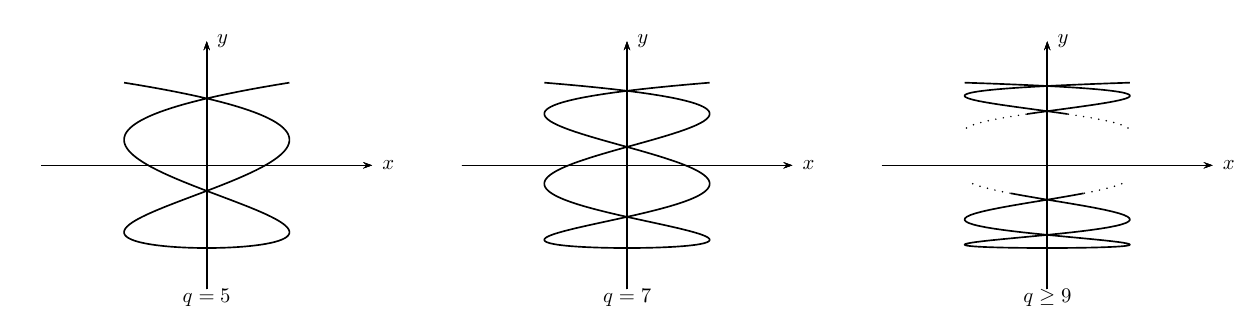}
  \caption{Chebyshev curves of type $(2, q)$ with $q > 3$}
  \label{fig:Chebyshev_2q}
\end{figure}
We also touch a relation between our Verlinde numbers and the classical Verlinde numbers.
\begin{example}[the $(2, q)$-torus knot exteriors]
  The Verlinde number $d(g, \bm{0})$ of the $(2,q)$-torus knot is expressed as
  \begin{equation}
    \label{eq:Verlinde_2q}
    d(g, \bm{0})
    =\left(\frac{1}{4}\right)^{g-1}
      \sum_{0<j<q} \left(\frac{q}{2\sin^2 \frac{\pi j}{q}}\right)^{g-1}.
  \end{equation}
  The classical Verlinde number $(q/2)^{g-1}\sum_{0<j<q} \sin^{2-2g} (\pi j / q)$ appears in the right hand side of~\eqref{eq:Verlinde_2q}.
\end{example}
We have seen that our Verlinde number $d(g, \bm{0})$ for the $(2, q)$-torus knot is an element in $(1/2)^{g-2}\Z$. We also have the following corollary.
\begin{corollary}
  Theorem~\ref{thm:integerness} shows that 
  the classical Verlinde number takes its value in $2^g \Z$, that is, it holds that 
  \[\left(\frac{q}{2}\right)^{g-1}\sum_{0<j<q} \sin^{2-2g} \frac{\pi j}{q} \in 2^g \Z\]
  for odd $q$ and $\forall g \ge 0$.
\end{corollary}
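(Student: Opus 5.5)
The plan is to specialize Theorem~\ref{thm:integerness} to $p=2$ and unwind the normalizations relating the torsion sum~\eqref{eq:sum_torsions} to the classical Verlinde number $V_g := (q/2)^{g-1}\sum_{0<j<q}\sin^{2-2g}(\pi j/q)$. Since $q$ is odd, $2$ and $q$ are coprime, so the $(2,\,q)$-torus knot lies within the standing assumptions ($p,q>0$ coprime, $q$ odd). Theorem~\ref{thm:integerness} therefore applies and gives
\[
\sum_{[\rho]\in \mathrm{tr}_\mu^{-1}(c)}(2\Tor{\rho})^{g-1}\in\Z
\]
for all $g\ge 0$. The right-hand side of~\eqref{eq:sum_torsions} does not depend on $c$, so genericity of $c$ plays no role.

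Next I will compute this sum explicitly for $p=2$. The index $a$ only takes the value $a=1$, and $\sin(\pi/2)=1$. Using the second expression in~\eqref{eq:sum_torsions}, the sum equals
\[
\frac12\sum_{0<j<q}\left(\frac{2q}{8\sin^2(\pi j/q)}\right)^{g-1}
=\frac12\left(\frac{q}{4}\right)^{g-1}\sum_{0<j<q}\sin^{2-2g}\frac{\pi j}{q}
=\frac12\cdot 2^{1-g}\,V_g=2^{-g}V_g .
\]
As a consistency check I will use the other route: by~\eqref{eq:Verlinde_2q}, $d(g,\bm{0})=4^{1-g}V_g$, and the sum equals $2^{g-2}d(g,\bm{0})=2^{g-2}2^{2-2g}V_g=2^{-g}V_g$, which agrees.

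Combining the two steps gives $2^{-g}V_g\in\Z$, that is, $V_g\in 2^g\Z$ for every $g\ge 0$, which is the claim. The only delicate point is the bookkeeping of the powers of $2$: the factor $1/2$ from the symmetrization in~\eqref{eq:sum_torsions}, the $8$ in the torsion formula, and the $(1/4)^{g-1}$ normalization in $d$. Computing the sum in both ways, as above, guards against an off-by-one error in the exponent.
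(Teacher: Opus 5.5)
Your proposal is correct and is essentially the paper's argument: specialize to the $(2,q)$-torus knot, use \eqref{eq:Verlinde_2q} to get $d(g,\bm{0})=4^{1-g}V_g$, and combine this with $2^{g-2}d(g,\bm{0})\in\Z$ (Theorem~\ref{thm:integerness}, equivalently Proposition~\ref{prop:Verlinde}) to obtain $2^{-g}V_g\in\Z$. Your bookkeeping of the powers of $2$ by both routes is accurate.
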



\end{document}